 \newtheorem{theorem}{Theorem}[section]
 \newtheorem{corollary}[theorem]{Corollary}
 \newtheorem{lemma}[theorem]{Lemma}
\newcommand{\C}{\mathbb{C}}
\newcommand{\N}{\mathbb{N}}
\newcommand{\cL}{\mathcal{L}}
\newcommand{\cM}{\mathcal{M}}
\newcommand{\cP}{\mathcal{P}}
\newcommand{\eps}{\varepsilon}
\newcommand{\Image}{\operatorname{Im}}
\newcommand{\Ker}{\operatorname{Ker}}
\begin{document}
\title[The Coburn-Simonenko theorem for Toeplitz operators]{%
The Coburn-Simonenko theorem for Toeplitz operators acting between Hardy type 
subspaces of different Banach function spaces}
\author{Alexei Yu. Karlovich}
\address{%
A. Yu. Karlovich,
Centro de Matem\'atica e Aplica\c{c}\~oes,
Departamento de Matem\'a\-tica, Faculdade de Ci\^encias e Tecnologia,
Universidade Nova de Lisboa,
Quinta da Torre, 2829--516 Caparica, Portugal}
\email{oyk@fct.unl.pt}
\thanks{%
This work was partially supported by the Funda\c{c}\~ao para a Ci\^encia 
e a Tecnologia (Portuguese Foundation for Science and Technology)
through the project UID/MAT/00297/2013 (Centro de Matem\'atica e 
Aplica\c{c}\~oes). 
}
\begin{abstract}
Let $\Gamma$ be a rectifiable Jordan curve, let $X$ and $Y$ be two reflexive 
Banach function spaces over $\Gamma$ such that the 
Cauchy singular  integral operator $S$ is bounded on each of them, and let
$M(X,Y)$ denote the space of pointwise multipliers from $X$ to $Y$.
Consider the Riesz projection $P=(I+S)/2$, the corresponding Hardy type 
subspaces $PX$ and $PY$, and the Toeplitz operator $T(a):PX\to PY$ defined 
by $T(a)f=P(af)$ for a symbol $a\in M(X,Y)$. We show that if 
$X\hookrightarrow Y$ and 
$a\in M(X,Y)\setminus\{0\}$, then $T(a)\in\cL(PX,PY)$ has a trivial kernel
in $PX$ or a dense image in $PY$. In particular, if $1<q\le p<\infty$, 
$1/r=1/q-1/p$, and $a\in L^{r}\equiv M(L^p,L^q)$ is a nonzero function, 
then the Toeplitz operator $T(a)$, acting from the Hardy space $H^p$ to 
the Hardy space $H^q$, has a trivial kernel in $H^p$ or a dense image in $H^q$.
\end{abstract}
\keywords{%
Toeplitz operator, 
symbol, 
Banach function space, 
variable Lebesgue space,
pointwise mutiplier, 
Coburn-Simonenko theorem}
\subjclass[2010]{47B35, 46E30}
\maketitle
\section{Introduction}
Let $\Gamma$ be a Jordan curve, that is, a curve that homeomorphic to a circle. 
We suppose that $\Gamma$ is rectifiable and equip it with the Lebesgue length 
measure $|d\tau|$ and the counter-clockwise orientation. The Cauchy singular 
integral of a measurable function $f:\Gamma\to\C$ is defined by
\begin{equation}\label{eq:def-S}
(Sf)(t):=\lim_{\eps\to 0}\frac{1}{\pi i}\int_{\Gamma\setminus\Gamma(t,\eps)}
\frac{f(\tau)}{\tau-t}d\tau,
\quad t\in\Gamma,
\end{equation}
where the ``portion'' $\Gamma(t,\eps)$ is
\[
\Gamma(t,\eps):=\{\tau\in\Gamma:|\tau-t|<\eps\},
\quad \eps>0.
\]
It is well known that $(Sf)(t)$ exists a.e. on $\Gamma$ whenever $f$ is 
integrable (see \cite[Theorem~2.22]{Dynkin87}).

For two normed spaces $X$ and $Y$, we will write $X\hookrightarrow Y$ 
if there is a constant $c\in(0,\infty)$ such that $\|f\|_Y\le c\|f\|_X$ 
for all $f\in X$, $X=Y$ if $X$ and $Y$ coincide as sets and
there are constants $c_1,c_2\in(0,\infty)$ such that 
$c_1\|f\|_X\le \|f\|_Y\le c_2\|f\|_X$ for all $f\in X$, and 
$X\equiv Y$ if $X$ and $Y$ coincide as sets and $\|f\|_X=\|f\|_Y$
for all $f\in X$. As usual, the space of all bounded linear operators
from $X$ to $Y$ is denoted by $\cL(X,Y)$. We adopt the standard 
abbreviation $\cL(X)$ for $\cL(X,X)$.

Let $\gamma$ be a measurable subset of $\Gamma$ of positive measure.
The set of all measurable complex-valued functions on $\gamma$ is denoted by 
$\cM(\gamma)$. Let $\cM^+(\gamma)$ be the subset of functions in $\cM(\gamma)$ 
whose values lie in $[0,\infty]$. The characteristic function of a measurable 
set $E\subset\gamma$ is denoted by $\chi_E$.

Following \cite[Chap.~1, Definition~1.1]{BS88}, a mapping 
$\rho_\gamma:\cM^+(\gamma)\to [0,\infty]$ is called a Banach function norm
if, for all functions $f,g, f_n\in\cM^+(\gamma)$ with $n\in\N$, for all
constants $a\ge 0$, and for all measurable subsets $E$ of $\gamma$, the
following  properties hold:
\begin{eqnarray*}
{\rm (A1)} & &
\rho_\gamma(f)=0  \Leftrightarrow  f=0\ \mbox{a.e.},
\quad
\rho_\gamma(af)=a\rho_\gamma(f),
\quad
\rho_\gamma(f+g) \le \rho_\gamma(f)+\rho_\gamma(g),\\
{\rm (A2)} & &0\le g \le f \ \mbox{a.e.} \ \Rightarrow \ 
\rho_\gamma(g) \le \rho_\gamma(f)
\quad\mbox{(the lattice property)},\\
{\rm (A3)} & &0\le f_n \uparrow f \ \mbox{a.e.} \ \Rightarrow \
       \rho_\gamma(f_n) \uparrow \rho_\gamma(f)\quad\mbox{(the Fatou property)},\\
{\rm (A4)} & &\rho_\gamma(\chi_E) <\infty,\\
{\rm (A5)} & &\int_E f(\tau)|d\tau| \le C_E\rho_\gamma(f)
\end{eqnarray*}
with the constant $C_E \in (0,\infty)$ that may depend on $E$ and 
$\rho_\gamma$,  but is independent of $f$. When functions differing only on 
a set of measure  zero are identified, the set $X(\gamma)$ of all functions 
$f\in\cM(\gamma)$ for  which  $\rho(|f|)<\infty$ is called a Banach function
space. For each $f\in X(\gamma)$, the norm of $f$ is defined by
\[
\|f\|_X(\gamma) :=\rho(|f|).
\]
The set $X(\gamma)$ under the natural linear space operations and under 
this norm becomes a Banach space (see 
\cite[Chap.~1, Theorems~1.4 and~1.6]{BS88}) and
\[
L^\infty(\gamma)\hookrightarrow X(\gamma)\hookrightarrow L^1(\gamma).
\]
If $\rho_\gamma$ is a Banach function norm, its associate norm 
$\rho'_\gamma$ is defined on $\cM^+(\gamma)$ by
\[
\rho_\gamma'(g):=\sup\left\{
\int_\gamma f(\tau)g(\tau)|d\tau| \ : \ 
f\in \cM^+(\gamma), \ \rho_\gamma(f) \le 1
\right\}, \quad g\in \cM^+(\gamma).
\]
It is a Banach function norm itself \cite[Chap.~1, Theorem~2.2]{BS88}.
The Banach function space $X'(\gamma)$ determined by the Banach function norm
$\rho'_\gamma$ is called the associate space (K\"othe dual) of $X(\gamma)$. 
The associate space $X'(\gamma)$ can be viewed a subspace of the dual space
$X^*(\gamma)$. 

Recall that, since the Lebesgue length measure $|d\tau|$ is separable
(see, e.g., \cite[Section~6.10]{KA84}), a Banach function space $X(\gamma)$ 
over $\gamma$ is separable if and only if its
K\"othe dual space $X'(\gamma)$ is isometrically isomorphic to the Banach dual
space $X^*(\gamma)$ (see, e.g., \cite[Chap.~1, Corollaries 4.3, 4.4]{BS88}).
A Banach function space $X(\gamma)$ reflexive if and only if $X(\gamma)$ and  
$X'(\gamma)$ are  separable (see, e.g., \cite[Chap.~1, Corollary~5.6]{BS88}).

For Banach function spaces $X(\gamma)$ and $Y(\gamma)$, let
$M(X(\gamma),Y(\gamma))$ denote the space of pointwise multipliers from
$X(\gamma)$ to $Y(\gamma)$ defined by 
\[
M(X(\gamma),Y(\gamma)):=\{f\in\cM(\gamma)\ :\ 
fg\in Y(\gamma)\text{ for all } g\in X(\gamma)\}.
\]
It is a Banach function space with respect to the operator norm
\[
\|f\|_{M(X(\gamma),Y(\gamma))}=\sup\{\|fg\|_{Y(\gamma)}\ :\ 
g\in X(\gamma),\ \|g\|_{X(\gamma)}\le 1\}.
\]
In particular, $M(X(\gamma),X(\gamma))\equiv L^\infty(\gamma)$.
Note that it may happen that the space $M(X(\gamma),Y(\gamma))$ contains 
only the zero function.
For instance, if $1\le p<q<\infty$, then $M(L^p(\gamma),L^q(\gamma))=\{0\}$.
The continuous embedding 
$L^\infty(\gamma)\hookrightarrow M(X(\gamma),Y(\gamma))$ 
holds if and only if $X(\gamma)\hookrightarrow Y(\gamma)$. For example, if 
$1\le q\le p\le\infty$, then $L^p(\gamma)\hookrightarrow L^q(\gamma)$ and 
$M(L^p(\gamma),L^q(\gamma))\equiv L^{r}(\gamma)$, where $1/r=1/q-1/p$. 
For these and many other properties and examples, we refer to 
\cite{KLM13,LT16,MN10,MP89,N16} (see also references therein).

For the brevity, we will write $X:=X(\Gamma)$ if $\Gamma$ is a 
rectifiable Jordan curve.
If $X$ is a reflexive Banach function space over a rectifiable Jordan 
curve $\Gamma$ and the Cauchy singular integral operator defined by 
\eqref{eq:def-S} is bounded on $X$, then in view of 
\cite[Theorem~6.1]{K03} and the H\"older inequality for Banach function
spaces (see, e.g., \cite[Chap.~1, Theorem~2.4]{BS88}), the curve 
$\Gamma$ is a Carleson curve (or Ahlfors-David regular curve), that is, 
\[
\sup_{t\in\Gamma}\sup_{\eps>0}\frac{|\Gamma(t,\eps)|}{\eps}<\infty.
\]
Moreover, by \cite[Lemma~6.4]{K03}, the operators
\[
P:=(I+S)/2,\quad Q:=(I-S)/2
\]
are bounded projections both on $X$ and on $X'$, the latter means that 
$P^2=P$ and $Q^2=Q$. Then we can define Hardy type subspaces $PX,QX$ of 
$X$ and $PX',QX'$ of $X'$.

In what follows we will always assume that $X$ and $Y$ are reflexive
Banach function spaces and $S$ is bounded on both $X$ and $Y$. For 
$a\in M(X,Y)$, define the Toeplitz operator $T(a):PX\to PY$ with symbol $a$ by
\[
T(a)f=P(af),
\quad f\in PX.
\]
It is clear that $T(a)\in\cL(PX,PY)$ and 
\[
\|T(a)\|_{\cL(PX,PY)}\le\|P\|_{\cL(Y)}\|a\|_{M(X,Y)}.
\]

We note that there is a huge literature dedicated to Toeplitz operator
acting between the same Hardy spaces $H^p=PL^p$, $1<p<\infty$,
see, e.g., the monographs by 
Douglas \cite{D98},
B\"ottcher and Silbermann \cite{BS06},
Gohberg, Goldberg, Kaashoek \cite{GGK90},
Nikolski \cite{N02} 
for Toeplitz operators on Hardy spaces over the unit circle and the monograph 
by B\"ottcher and Karlovich \cite{BK97} for Toeplitz operators on weighted
Hardy spaces over Carleson curves. 

Surprisingly enough, we could find only one paper by Tolokonnikov \cite{T87} 
dedicated to Toeplitz operators acting between different Hardy spaces
$H^p$ and $H^q$ over the unit circle. In particular, he described in 
\cite[Theorem~4]{T87} all symbols generating bounded Toeplitz operators from 
$H^p$ to $H^q$ for $0<p,q\le\infty$. Very recently, Le\'snik \cite{L17}
proposed to study Toeplitz and Hankel operators between abstract Hardy spaces
$H[X]$ and $H[Y]$ built upon different separable rearrangement-invariant 
Banach function spaces $X$ and $Y$ over the unit circle such that 
$X\hookrightarrow Y$ and the space $Y$ has nontrivial Boyd indices. Notice 
that the latter condition 
is equivalent to the boundedness of the operator $S$ on the space 
$Y$, whence $H[Y]=PY$. Le\'snik obtained analogues of the Brown-Halmos and 
Nehari theorems (see \cite[Theorem~4.2]{L17} and \cite[Theorem~5.5]{L17}, 
respectively), extending results of the author \cite{K04} for the case 
of a reflexive rearrangement-invariant Banach function space $X$ 
(that is, $X=Y$) with nontrivial Boyd indices. He also proved 
\cite[Theorem~6.1]{L17} that a Toeplitz operator $T(a):H[X]\to H[Y]$ is 
compact if and only if $a=0$.

Inspired by the work of Le\'snik \cite{L17}, we prove the following analogue 
of the Coburn-Simonenko theorem for Toeplitz operators $T(a):PX\to PY$ in the 
case when $X$ and $Y$ are different Banach function spaces. Notice that
we do not assume that the spaces $X$ and $Y$ are rearrangement-invariant.
\begin{theorem}\label{th:main}
Let $X$ and $Y$ be reflexive Banach function spaces over a rectifiable Jordan
curve $\Gamma$. Suppose $X\hookrightarrow Y$ and the Cauchy singular 
integral operator $S$ given by \eqref{eq:def-S} is bounded on $X$ and on $Y$. 
If $a\in M(X,Y)\setminus\{0\}$, then $T(a)\in\cL(PX,PY)$ has a trivial kernel 
in $PX$ or a dense image in $PY$.
\end{theorem}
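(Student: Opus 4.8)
The plan is to argue by contradiction, exploiting the duality $Y^*=Y'$ afforded by reflexivity together with a uniqueness (F.\ and M.\ Riesz type) property of the Hardy subspaces. Suppose that $T(a)$ has both a nontrivial kernel and a non-dense image. From the first assumption I obtain $f\in PX\setminus\{0\}$ with $P(af)=0$, so that $af=Q(af)\in QY$. From the second assumption, the Hahn--Banach theorem produces a nonzero functional on $PY$ annihilating $\Image T(a)$; since $Y$ is reflexive, this functional is represented by some $g\in Y'$ with $\langle P(af),g\rangle=0$ for all $f\in PX$, where $\langle u,v\rangle:=\int_\Gamma u(\tau)v(\tau)\,|d\tau|$. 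I may normalize $g$ so that $g=P^*g$ (replacing $g$ by $P^*g$ leaves the functional on $PY$ unchanged), and then $g\neq 0$.

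The first technical step is to make the adjoint $P^*$ on $Y'$ explicit. Parametrizing $\Gamma$ by arc length and writing $b$ for the (unimodular) unit tangent, a direct computation with the kernel $1/(\tau-t)$ and Fubini's theorem gives $S^*=-M_bSM_{\bar b}$, whence $P^*=M_bQM_{\bar b}$ and $Q^*=M_bPM_{\bar b}$, where $M_b$ denotes multiplication by $b$. Setting $\tilde g:=\bar b\,g$, the normalization $g=P^*g$ becomes $\tilde g\in QY'$, and $\tilde g\neq 0$ because $|b|=1$. Moreover $\langle P(af),g\rangle=\langle af,P^*g\rangle=\langle af,g\rangle=\langle f,ag\rangle$, so the annihilation condition reads $ag\in(PX)^\perp=\Ker P^*$ in $X'$ (note $ag\in X'$, since $a\in M(X,Y)$ forces $a\in M(Y',X')$ by H\"older's inequality for Banach function spaces). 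Unwinding $P^*=M_bQM_{\bar b}$ once more, this is equivalent to $a\tilde g\in PX'$.

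At this point I have produced $f\in PX\setminus\{0\}$ with $af\in QY$, and $\tilde g\in QY'\setminus\{0\}$ with $a\tilde g\in PX'$. The heart of the argument is then the product $w:=af\tilde g$, which lies in $L^1(\Gamma)$ by H\"older's inequality. Viewing it as $w=f\cdot(a\tilde g)$ with $f\in PX$ and $a\tilde g\in PX'$, both factors are boundary values of functions analytic in the interior domain $D_+$, so $w$ extends analytically to $D_+$ (a Smirnov-class product theorem), i.e.\ $w\in PL^1$. Viewing it instead as $w=(af)\cdot\tilde g$ with $af\in QY$ and $\tilde g\in QY'$, both factors extend analytically to the exterior domain $D_-$ and vanish at infinity, so $w$ extends analytically to $D_-$ and vanishes at infinity, i.e.\ $w\in QL^1$. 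A function in $PL^1\cap QL^1$ extends to an entire function vanishing at infinity, hence $w=0$; that is, $af\tilde g=0$ a.e.\ on $\Gamma$.

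Finally I invoke the uniqueness property of Hardy subspaces: a nonzero element of $PX$ or of $QY'$ can vanish only on a subset of $\Gamma$ of measure zero. Thus $f\neq 0$ a.e.\ and $\tilde g\neq 0$ a.e., and $af\tilde g=0$ forces $a=0$ a.e., contradicting $a\in M(X,Y)\setminus\{0\}$. I expect the main obstacles to be twofold: establishing the adjoint identity $P^*=M_bQM_{\bar b}$ cleanly and tracking the unimodular tangent factor $b$ through the duality; and justifying the two analytic-continuation claims for $w$ on a general rectifiable Carleson Jordan curve (the Smirnov-class product theorem and the triviality of $PL^1\cap QL^1$), together with the F.\ and M.\ Riesz type nonvanishing, all of which are classical on the unit circle but require the curve-and-Banach-function-space machinery in the present generality.
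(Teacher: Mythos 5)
Your proposal is correct and follows essentially the same route as the paper: your Hahn--Banach plus adjoint-twisting step, which produces $\tilde g\in QY'\setminus\{0\}$ with $Q(a\tilde g)=0$ via $P^*=M_bQM_{\bar b}$, is exactly a direct, one-directional unpacking of the paper's companion-operator lemma (Lemma~\ref{le:Toeplitz-companion}), where the same identity appears in the sesquilinear form $P^*=H_\Gamma QH_\Gamma$. The heart of your argument --- forming $w=f\cdot(a\tilde g)=(af)\cdot\tilde g$, analytic inside and outside $\Gamma$ with vanishing at infinity, hence zero, and then invoking the almost-everywhere nonvanishing of nonzero Hardy-class functions to force $a=0$ --- coincides with the paper's proof, which packages these facts as Lemmas~\ref{le:Privalov} and~\ref{le:subspaces-properties} and Theorem~\ref{th:Lusin-Privalov}.
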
 
The above result was proved by Coburn \cite{C66} for the case of $X=Y=L^2$ 
over the unit circle and  by Simonenko \cite{S68} in a more general of setting 
of $X=Y=L^p$, $1<p<\infty$, over so-called Lyapunov curves. We also refer to 
\cite[Theorem~6.17]{BK97}, where the above theorem is proved in the case 
$X=Y=L^p(w)$, where $L^p(w)$, $1<p<\infty$, is a Lebesgue space with a 
Muckenhoupt weight over a Carleson Jordan curve.

The statement of Theorem~\ref{th:main} has a more precise form for concrete 
Banach function spaces $X,Y$ when $M(X,Y)$ can be calculated and conditions 
for the boundedness of $S$ are known. Here we mention only the case of Toeplitz 
operators acting from the Hardy space $H^p=PL^p$ to the Hardy space $H^q=PL^q$ 
as the  simplest example. 
\begin{corollary}\label{co:Hardy}
Let $1<q\le p<\infty$ and $1/r=1/q-1/p$. Suppose $\Gamma$ is a Carleson  
Jordan curve. If $a\in L^r\setminus\{0\}$, then the Toeplitz operator
$T(a)\in\cL(H^p,H^q)$ has a trivial kernel in $H^p$ or a dense image in $H^q$.
\end{corollary}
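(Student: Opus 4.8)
The plan is to argue by contradiction: assuming that $T(a)$ has both a nontrivial kernel in $PX$ and a non-dense image in $PY$, I would deduce that $a=0$, contrary to hypothesis. The starting point is to dualize. I equip $Y$ and its associate space $Y'$ with the bilinear pairing $\langle u,v\rangle:=\int_\Gamma u(\tau)v(\tau)\,d\tau$, which is finite by the H\"older inequality for Banach function spaces and, by reflexivity and separability, identifies $Y^*$ with $Y'$ and $X^*$ with $X'$. A direct computation with the Cauchy kernel gives $S^T=-S$ for this pairing, whence $P^T=Q$ and $Q^T=P$ on the associate spaces. Consequently the annihilator of $PY$ in $Y'$ equals $PY'$, so that $(PY)^*$ is realized by $QY'$ via $\langle Pu,g\rangle$ with $g\in QY'$, and likewise $(PX)^*$ is realized by $QX'$. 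I would also use the duality of pointwise multipliers $M(X,Y)=M(Y',X')$ (citable from the references on multipliers in the excerpt), so that $ag\in X'$ whenever $g\in Y'$.

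With these identifications, a nontrivial kernel yields $f\in PX\setminus\{0\}$ with $P(af)=0$, i.e.\ $af\in QY$; and a non-dense image yields a nonzero functional on $PY$ annihilating $\Image T(a)$, represented by some $g\in QY'\setminus\{0\}$ with $\langle P(af'),g\rangle=0$ for all $f'\in PX$. Using $P^T=Q$ and $Qg=g$, this reads $\langle f',ag\rangle=0$ for all $f'\in PX$, hence $ag\in PX'$, that is $Q(ag)=0$. I now set $h:=afg$, which lies in $L^1(\Gamma)$ by H\"older, and write it in two ways: the representation $h=f\cdot(ag)$ is a product of the $P$-functions $f\in PX$ and $ag\in PX'$, while $h=(af)\cdot g$ is a product of the $Q$-functions $af\in QY$ and $g\in QY'$.

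The heart of the argument is to show $h=0$. Here I would pass to the interior and exterior domains $D_+,D_-$ bounded by the Carleson Jordan curve $\Gamma$ and use Smirnov-class theory: elements of $PX,PX'$ are boundary values of Smirnov functions analytic in $D_+$, and elements of $QY,QY'$ are boundary values of Smirnov functions analytic in $D_-$ and vanishing at $\infty$. Since products of Smirnov functions are again of Smirnov class and $h\in L^1(\Gamma)$, the first representation exhibits $h$ as the boundary value of a function in $E^1(D_+)$, so $Qh=0$, and the second as the boundary value of a function in $E^1(D_-)$ vanishing at infinity, so $Ph=0$; hence $h=Ph+Qh=0$ a.e.\ on $\Gamma$. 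Finally, by the Smirnov-class uniqueness theorem (an $E^1$ analogue of the F.\ and M.\ Riesz and Luzin--Privalov theorems, resting on $\log|F|\in L^1(\Gamma)$), a nonzero boundary-value function in $PX$ or in $QY'$ can vanish only on a set of measure zero; thus $f\ne 0$ and $g\ne 0$ vanish only on null sets, so $fg\ne 0$ a.e., and $afg=0$ a.e.\ forces $a=0$ a.e., contradicting $a\in M(X,Y)\setminus\{0\}$. The main obstacle I anticipate is exactly this analytic step: justifying, in the abstract reflexive Banach-function-space setting over a general rectifiable Carleson Jordan curve, both the Smirnov representation of the Hardy subspaces $PX,QY'$ (including that the products land in $E^1$) and the accompanying uniqueness theorem; the adjoint computation and the multiplier duality $M(X,Y)=M(Y',X')$ are comparatively routine. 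The Corollary then follows by specializing to $X=L^p$ and $Y=L^q$, where $M(L^p,L^q)\equiv L^r$ with $1/r=1/q-1/p$, where $PL^p=H^p$ and $PL^q=H^q$, and where $S$ is bounded on $L^p$ and $L^q$ over the Carleson curve $\Gamma$.
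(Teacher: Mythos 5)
Your overall strategy is the same as the paper's: argue by contradiction; dualize so that non-density of the image produces a nonzero $g\in QY'$ with $Q(ag)=0$; form $h=afg$ and show $h=0$; finish with the Lusin--Privalov theorem. Your bilinear pairing with $S^T=-S$, $P^T=Q$ is exactly Lemma~\ref{le:projections-adjoints} ($S^*=-H_\Gamma SH_\Gamma$) rewritten, and your Hahn--Banach extraction of $g$ is a streamlined, one-directional substitute for the companion-operator Lemma~\ref{le:Toeplitz-companion}; the specialization $X=L^p$, $Y=L^q$, $M(L^p,L^q)\equiv L^r$ together with David's theorem is also how the paper obtains the corollary (via constant exponents in Theorem~\ref{th:main-to-VLS}). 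Those parts of your proposal are correct.

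The genuine gap is the analytic step you yourself flag as the main obstacle, and it is not merely technical: the inference ``$h$ is a product of Smirnov-class functions with $L^1$ boundary values, hence the boundary value of a function in $E^1(D_+)$'' is false for general rectifiable Jordan curves. What is true without extra hypotheses is that an $N^+(D_+)$ function with $L^1(\Gamma)$ boundary data lies in $E^1(D_+)$ (this transfers from Smirnov's theorem on the disc via a conformal map $\phi$, because $\phi'\in H^1$). What fails is the preceding link, that elements, or products of elements, of $E^1(D_+)$ lie in $N^+(D_+)$: this holds precisely when $D_+$ is a Smirnov domain, i.e.\ when $\phi'$ is outer. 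By the Keldysh--Lavrentiev construction there exist rectifiable Jordan curves for which $\phi'$ is a nonconstant singular inner function; there the function $F$ with $F\circ\phi=1/\phi'$ belongs to $E^1(D_+)$ with unimodular boundary values, yet $F\cdot F\notin E^1(D_+)$ --- exactly the configuration your argument needs. So your route silently requires the assertion that Carleson Jordan curves bound Smirnov domains, which you neither prove nor cite. The paper avoids Smirnov theory altogether: the spaces $L^1_\pm$ are defined by moment conditions, the product statement $X_\pm\cdot(X')_\pm\subset L^1_\pm$ is Lemma~\ref{le:subspaces-properties}(a) (cited from \cite{K03}), and $h=0$ then follows from Privalov's Lemma~\ref{le:Privalov}. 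In fact, your own duality identities already fill the gap with no function theory: for $n\ge 0$ one has $\tau^n f\in PX$ by the moment characterization of Lemma~\ref{le:subspaces-properties}(b), hence
\[
\int_\Gamma \tau^n f\,(ag)\,d\tau=\langle P(\tau^n f),ag\rangle=\langle \tau^n f,Q(ag)\rangle=0,
\]
so $h\in L^1_+$; symmetrically, using $Q^T=P$, $af\in QY$ and $\tau^n g\in QY'$ for $n<0$, one gets $h\in (L^1)_-^0$; then Lemma~\ref{le:Privalov} gives $h=0$. With that repair your argument coincides with the paper's proof.
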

It seems that the above corollary is new even in the case of the unit circle. 

The paper is organized as follows. In Section~\ref{sec:preliminaries}, we 
collect properties of Banach function spaces and their Hardy type subspaces
proved elsewhere. In Section~\ref{sec:proofs}, we first relate the triviality
of the kernel (resp. the density of the image) of a Toeplitz operator
$T(a)\in\cL(PX,PY)$ with the density of the range (resp. triviality of the 
kernel) of its companion operator $\widetilde{T}(a):\cL(QY',QX')$ defined 
by $\widetilde{T}(a)f=Q(af)$. Then show that one of the operators $T(a)$ 
or $\widetilde{T}(a)$ is injective with the aid of the Lusin-Privalov 
theorem and other results stated in Section~\ref{sec:preliminaries}. 
In Section~\ref{sec:VLS} we recall the definition of variable Lebesgue
spaces $L^{p(\cdot)}$, which give a non-trivial example of Banach function 
spaces. Further, we describe the space $M(L^{p(\cdot)},L^{r(\cdot)})$
and formulate conditions for the boundedness of the operator Cauchy
singular operator $S$ on $L^{p(\cdot)}$. These results allow us to reformulate
Theorem~\ref{th:main} for Toeplitz operators between $PL^{p(\cdot)}$
and $PL^{q(\cdot)}$ in terms of variable exponents $p,q:\Gamma\to(1,\infty)$.
In particular, we immediately get Corollary~\ref{co:Hardy}, taking
all exponents constant.
\section{Preliminaries}\label{sec:preliminaries}
\subsection{The Lusin-Privalov theorem}
Let $\Gamma$ be a rectifiable Jordan curve. It divides the plane into a bounded
connected component $D^+$ and an unbounded connected component $D^-$. We
provide $\Gamma$ with the counter-clockwise orientation, that is, we demand
that $D^+$ stays on the left of $\Gamma$ when the curve is traced out in the
positive direction. Without loss of generality we suppose that $0\in D^+$.
Put
\begin{eqnarray*}
L^1_+ &:=& \left\{ f\in L^1:\quad\int_\Gamma f(\tau)\tau^n d\tau=0
\quad\mbox{for}\quad n\ge 0\right\},
\\
(L^1)_-^0 &:=& \left\{ f\in L^1:\quad\int_\Gamma f(\tau)\tau^n d\tau=0
\quad\mbox{for}\quad n<0\right\},
\\
L^1_- &:=& (L^1)_-^0 \oplus\C.
\end{eqnarray*}
From \cite[pp.~202--206]{P50} one can extract the following result.
\begin{lemma}\label{le:Privalov}
We have $L_+^1\cap (L^1)_-^0=\{0\}$ and  $L^1_+\cap L^1_-=\C$.
\end{lemma}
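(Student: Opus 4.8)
The plan is to deduce both identities from the behaviour of the Cauchy integral, and I would first dispose of the second identity as an easy consequence of the first. Every constant $c\in\C$ lies in $L^1_+$, because $\int_\Gamma c\,\tau^n\,d\tau=0$ for each $n\ge 0$ (the integrand is holomorphic and $\Gamma$ is closed), and $c$ lies in $L^1_-$ by the very definition $L^1_-=(L^1)_-^0\oplus\C$; hence $\C\subseteq L^1_+\cap L^1_-$. Conversely, if $f\in L^1_+\cap L^1_-$, I would write $f=g+c$ with $g\in(L^1)_-^0$ and $c\in\C$; since $c\in L^1_+$, this gives $g=f-c\in L^1_+\cap(L^1)_-^0$, so $g=0$ once the first identity is known, and then $f=c\in\C$. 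Thus everything reduces to proving $L^1_+\cap(L^1)_-^0=\{0\}$.

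For the core statement I would take $f\in L^1_+\cap(L^1)_-^0$, so that $\int_\Gamma f(\tau)\tau^n\,d\tau=0$ for \emph{every} $n\in\mathbb{Z}$, and study the Cauchy integral
\[
F(z):=\frac{1}{2\pi i}\int_\Gamma\frac{f(\tau)}{\tau-z}\,d\tau ,
\]
which is holomorphic on $\C\setminus\Gamma$. Set $R:=\max_{\tau\in\Gamma}|\tau|$ and $r:=\min_{\tau\in\Gamma}|\tau|$; since $0\in D^+$ and $\Gamma$ is compact with $0\notin\Gamma$, we have $0<r\le R<\infty$. For $|z|>R$ the expansion $1/(\tau-z)=-\sum_{n\ge 0}\tau^n z^{-n-1}$ converges uniformly in $\tau\in\Gamma$, so term-by-term integration together with the vanishing of the moments of nonnegative index yields $F(z)=0$ on $\{|z|>R\}$, and since $D^-$ is connected, $F\equiv 0$ on $D^-$. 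For $|z|<r$ the expansion $1/(\tau-z)=\sum_{n\ge 0}z^n\tau^{-n-1}$ converges uniformly in $\tau\in\Gamma$, and the vanishing of the moments of negative index gives $F(z)=0$ on $\{|z|<r\}$, whence $F\equiv 0$ on the connected set $D^+$.

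It then remains to pass from the vanishing of $F$ on both sides of $\Gamma$ to the vanishing of $f$ itself, and here I would invoke the boundary theory of Cauchy integrals of integrable densities developed in \cite[pp.~202--206]{P50}: the nontangential limits $F^+$ and $F^-$ of $F$ from $D^+$ and from $D^-$ exist almost everywhere on $\Gamma$ and satisfy the Sokhotski--Plemelj jump relation $F^+(t)-F^-(t)=f(t)$ for almost all $t\in\Gamma$. Since $F$ vanishes identically on both $D^+$ and $D^-$, we get $f=F^+-F^-=0$ a.e., as required. I expect this last step to be the main obstacle: the existence of the nontangential boundary values and the validity of the jump formula for a merely integrable density on an arbitrary rectifiable Jordan curve is the delicate classical input, and it is exactly what the Lusin--Privalov results quoted from \cite{P50} are designed to supply; by contrast, the holomorphic computation on $D^\pm$ is elementary once the uniform convergence of the two geometric series is checked.
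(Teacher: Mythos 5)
Your proof is correct. The paper gives no argument of its own for this lemma---it simply extracts it from Privalov \cite[pp.~202--206]{P50}---and your reconstruction (all moments vanish $\Rightarrow$ the Cauchy transform $F$ vanishes identically on $D^+$ and $D^-$ by series expansion near $0$ and near $\infty$ plus the identity theorem $\Rightarrow$ $f=F^+-F^-=0$ a.e.\ by the Sokhotski--Plemelj jump relation, with the second identity reduced to the first via $\C\subset L^1_+$) is precisely the standard argument that the cited pages supply, and you correctly identify the jump formula for merely integrable densities on a rectifiable Jordan curve as the one genuinely nontrivial classical input, which is exactly what \cite{P50} provides.
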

The proof of the following important theorem  is contained in
\cite[p.~292]{P50} or \cite[Theorem~10.3]{D70}.
\begin{theorem}[Lusin-Privalov]\label{th:Lusin-Privalov}
Let $\Gamma$ be a rectifiable Jordan curve. If $f\in L_\pm^{1}$, then $f$ 
vanishes either almost everywhere on $\Gamma$ or almost nowhere on $\Gamma$.
\end{theorem}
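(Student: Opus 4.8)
The plan is to recognize this statement as the classical uniqueness principle for boundary values of functions in the Smirnov class, whose engine is the corresponding fact for the Hardy space $H^1$ of the unit disk: a nonzero function in $H^1(\mathbb{D})$ has nonvanishing nontangential boundary values almost everywhere. I would first dispose of the case $f\in L^1_-$ by the conformal involution $z\mapsto 1/z$. Since $0\in D^+$, this map carries $\Gamma$ to another rectifiable Jordan curve $\widetilde\Gamma:=\{1/\tau:\tau\in\Gamma\}$, interchanges the bounded and unbounded complementary components, and transforms the moment conditions defining $L^1_-$ on $\Gamma$ into those defining $L^1_+$ on $\widetilde\Gamma$. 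Hence it suffices to treat $f\in L^1_+$.

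For $f\in L^1_+$, I would expand the Cauchy kernel $1/(\tau-z)$ in negative powers of $z$ for $z\in D^-$ and read off that the conditions $\int_\Gamma f(\tau)\tau^n\,d\tau=0$ for $n\ge 0$ force the Cauchy integral $F(z):=\frac{1}{2\pi i}\int_\Gamma \frac{f(\tau)}{\tau-z}\,d\tau$ to vanish identically on $D^-$. Since $(Sf)(t)$ exists almost everywhere, the Plemelj formulas give nontangential boundary values $F^\pm$ with $F^+-F^-=f$; as $F^-=0$, the restriction of $F$ to $D^+$ lies in the Smirnov class $E^1(D^+)$ and has boundary values $F^+=f$ a.e. I would then pass to the disk by a Riemann map $\varphi:\mathbb{D}\to D^+$ with $\varphi(0)=0$. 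Because $\Gamma$ is rectifiable, the theorem of F.\ and M.\ Riesz yields $\varphi'\in H^1(\mathbb{D})$, so that $\theta\mapsto\varphi(e^{i\theta})$ is an absolutely continuous homeomorphism onto $\Gamma$, and, $D^+$ being a Smirnov domain, the boundary correspondence carries null sets to null sets in both directions. Under it $G:=(F\circ\varphi)\,\varphi'$ belongs to $H^1(\mathbb{D})$ with boundary values $G^*=(f\circ\varphi)\,\varphi'$.

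Finally I would invoke the disk uniqueness theorem: if $G\in H^1(\mathbb{D})$ and $G\not\equiv 0$, then dividing by the Blaschke product $B$ formed from its zeros yields a nonvanishing $H^1$ function with $|G^*|=|(G/B)^*|$ a.e., and Jensen's inequality gives $\log|G^*|\in L^1(\partial\mathbb{D})$, whence $G^*\neq 0$ almost everywhere. The same reasoning applied to the outer function $\varphi'$ gives $\varphi'^{*}\neq0$ a.e., so $f\circ\varphi\neq 0$ a.e.\ on $\partial\mathbb{D}$ whenever $f\not\equiv 0$; transporting this through the measure-preserving boundary correspondence shows $f\neq 0$ almost everywhere on $\Gamma$. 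Consequently a nonzero $f\in L^1_+$ cannot vanish on a subset of $\Gamma$ of positive measure, which is exactly the claimed alternative: $f$ vanishes almost everywhere or almost nowhere.

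I expect the crux to be the measure-theoretic transfer between $\Gamma$ and $\partial\mathbb{D}$, namely that the boundary homeomorphism and its inverse both send null sets to null sets, so that the hypothesis ``vanishing on a set of positive measure'' is genuinely conformally invariant. This is precisely where rectifiability is indispensable---through $\varphi'\in H^1(\mathbb{D})$ and the Smirnov property $\log|\varphi'^{*}|\in L^1$---and it is the feature that distinguishes rectifiable curves, for which the theorem holds, from general Jordan curves, for which the uniqueness may fail.
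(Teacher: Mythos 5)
Your outline is exactly the classical argument that the paper itself points to: the paper offers no proof of its own, citing \cite[p.~292]{P50} and \cite[Theorem~10.3]{D70}, and Duren's proof is precisely your transplantation scheme --- reduce to the interior class, pass to the disk via a conformal map $\varphi$, use rectifiability to get $\varphi'\in H^1(\mathbb{D})$ and the F.~and M.~Riesz absolute continuity of the boundary correspondence, and conclude from $\log$-integrability of the boundary modulus of a nonzero $H^1$ function. Your inversion step handling $L^1_-$ is correct, including the constant summand: under $\sigma=1/\tau$ the moments $\int_\Gamma f(\tau)\tau^n\,d\tau$, $n<0$, become moments with exponents $\le -2$ up to sign, and $\int_{\widetilde\Gamma}\sigma^n\,d\sigma=0$ for $n\ge 0$ by Cauchy's theorem, so $f\in L^1_-$ indeed goes to $\widetilde f\in L^1_+(\widetilde\Gamma)$; the inversion is bi-Lipschitz on $\Gamma$, so null sets transfer.

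Two corrections are needed, one factual and one a genuine gap. First, ``$D^+$ being a Smirnov domain'' is false in general: by the Keldysh--Lavrentiev construction there exist rectifiable Jordan curves whose interior is not a Smirnov domain ($\varphi'$ may carry a nontrivial singular inner factor, and then $\varphi'$ is not outer, contrary to your last paragraph). Fortunately you never need Smirnov-ness: the two-way null-set correspondence follows from $\varphi'\in H^1\setminus\{0\}$ alone, since then $\varphi'$ has nontangential boundary values with $\varphi'^*\ne 0$ a.e.\ and arc length on $\Gamma$ corresponds to $|\varphi'^*(e^{i\theta})|\,d\theta$; likewise $\log|\varphi'^*|\in L^1$ is a property of every nonzero $H^1$ function \cite[Theorem~2.2]{D70}, not the Smirnov property. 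Second, and more seriously, the inference ``as $F^-=0$, the restriction of $F$ to $D^+$ lies in $E^1(D^+)$'' does not follow from the Plemelj formulas: the Cauchy integral of a general $L^1$ density lies only in $E^p$ for $p<1$, and in a non-Smirnov domain one cannot upgrade a bounded-characteristic function with $L^1$ boundary data to $E^1$. What saves you is precisely the hypothesis $f\in L^1_+$: Smirnov's theorem (the moment characterization of $E^1$ boundary functions, proved in \cite[Chapter~10]{D70} next to the uniqueness theorem the paper cites, via Walsh's polynomial approximation on Jordan domains plus the F.~and M.~Riesz theorem on the circle) states that the vanishing of all moments $\int_\Gamma f(\tau)\tau^n\,d\tau$, $n\ge 0$, is equivalent to $f$ being the boundary function of an $E^1(D^+)$ function, necessarily the Cauchy integral $F$. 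You must cite or prove this; alternatively you can bypass $E^1$ entirely: since $F\in E^p(D^+)$ for $p<1$ and $\varphi'$ is zero-free in $H^1$, every real power $(\varphi')^{\alpha}$ has bounded characteristic, so $G=(F\circ\varphi)\varphi'$ lies in the Nevanlinna class, where the uniqueness theorem $\log|G^*|\in L^1$ still holds and suffices for your conclusion.
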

\subsection{Properties of Banach function spaces and pointwise multipliers}
In this subsection we collect some well known properties of Banach function
spaces and pointwise multipliers between them. 
\begin{lemma}[{\cite[Chap.~1, Proposition~2.10]{BS88}}]
\label{le:duality-embedding}
Let $X,Y$ be Banach function spaces over a rectifiable Jordan curve $\Gamma$ 
and let $X',Y'$ be their associate spaces, respectively. If  
$X\hookrightarrow Y$, then $Y'\hookrightarrow X'$.
\end{lemma}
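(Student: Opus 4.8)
The plan is to read the conclusion off directly from the variational definition of the associate norm, combined with the Hölder inequality for Banach function spaces. Recall that for any Banach function space $Z$ over $\Gamma$ with associate space $Z'$ one has the Hölder inequality
\[
\int_\Gamma |h(\tau)g(\tau)|\,|d\tau|\le\|h\|_Z\,\|g\|_{Z'},\qquad h\in Z,\ g\in Z',
\]
which is immediate from the definition $\|g\|_{Z'}=\sup\{\int_\Gamma f|g|\,|d\tau|:f\in\cM^+(\Gamma),\ \rho_Z(f)\le 1\}$ after normalizing $h$ by its $Z$-norm (see \cite[Chap.~1, Theorem~2.4]{BS88}). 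I would treat this as the single analytic input.

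Let $c\in(0,\infty)$ be the constant witnessing $X\hookrightarrow Y$, so that $\|f\|_Y\le c\|f\|_X$ for every $f\in X$. Fix $g\in Y'$. For each $f\in X$ with $\|f\|_X\le 1$ the embedding gives $\|f\|_Y\le c$, and the Hölder inequality applied in $Y$ yields
\[
\int_\Gamma |f(\tau)g(\tau)|\,|d\tau|\le\|f\|_Y\,\|g\|_{Y'}\le c\,\|g\|_{Y'}.
\]
Taking the supremum over all such $f$ and recalling that $\|g\|_{X'}=\sup\{\int_\Gamma |fg|\,|d\tau|:f\in X,\ \|f\|_X\le 1\}$ produces the single inequality
\[
\|g\|_{X'}\le c\,\|g\|_{Y'}.
\]

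This one inequality settles the whole statement. Its right-hand side is finite for every $g\in Y'$, so $g$ automatically has finite $X'$-norm and therefore belongs to $X'$; thus $Y'\subset X'$ as sets, and the inequality is precisely the continuity of the inclusion $Y'\hookrightarrow X'$ with the same constant $c$. I do not anticipate a genuine obstacle, since the argument is a routine duality computation; the point most likely to need care is that the set inclusion $Y'\subset X'$ must be obtained for free from finiteness of the associate norm rather than presupposed. To that end I would work throughout with the definition of the associate norm on $\cM^+(\Gamma)$, keep absolute values everywhere, and note that running over complex $f\in X$ with $|f|$ gives the same supremum as running over nonnegative $f$, since both the norm and the integral depend on $f$ only through $|f|$.
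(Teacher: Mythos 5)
Your argument is correct and is exactly the standard one: the paper itself gives no proof, deferring to \cite[Chap.~1, Proposition~2.10]{BS88}, whose proof is precisely your computation (H\"older's inequality in $Y$ plus the variational definition of the associate norm, yielding $\|g\|_{X'}\le c\|g\|_{Y'}$ with the same embedding constant). Your closing remarks correctly handle the only delicate points, namely that finiteness of $\rho_X(f)\le 1$ already places $f$ in $X$ so the embedding applies, and that membership $g\in X'$ comes for free from finiteness of the associate norm.
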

\begin{lemma}[{\cite[Section~2, property (vii)]{KLM13}}]
\label{le:properties-multipliers}
Let $X,Y$ be Banach function spaces over a rectifiable Jordan curve $\Gamma$
and let $X',Y'$ be their associate spaces, respectively. Then 
$M(X,Y)\equiv M(Y',X')$.
\end{lemma}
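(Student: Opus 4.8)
The plan is to prove the stronger pointwise statement that the two multiplier norms agree on all of $\cM(\Gamma)$, and then to read off both the set equality and the isometry at once. Concretely, I would fix an arbitrary $f\in\cM(\Gamma)$ and regard
\[
\|f\|_{M(X,Y)}=\sup\{\|fg\|_Y : g\in X,\ \|g\|_X\le 1\}
\]
as an element of $[0,\infty]$, allowing the supremum to be $+\infty$ when $f$ is not a multiplier, and similarly for $\|f\|_{M(Y',X')}$. The one nontrivial analytic input is the duality recovery of the norm of a Banach function space from its associate space: for every $u\in\cM(\Gamma)$,
\[
\|u\|_Y=\sup\left\{\int_\Gamma|u(\tau)h(\tau)|\,|d\tau| : h\in Y',\ \|h\|_{Y'}\le 1\right\},
\]
which is the Lorentz--Luxemburg theorem (the associate space of $Y'$ is again $Y$, with equal norms) and holds because $Y$ has the Fatou property (A3); see \cite[Chap.~1]{BS88}. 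I stress that this step needs only (A1)--(A3) and not reflexivity.

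With this in hand the computation is a symmetric interchange of suprema. Applying the displayed formula to $u=fg$ gives
\[
\|f\|_{M(X,Y)}=\sup_{\|g\|_X\le 1}\ \sup_{\|h\|_{Y'}\le 1}\int_\Gamma|f(\tau)||g(\tau)||h(\tau)|\,|d\tau|.
\]
Since the integrand is nonnegative and depends on $g$ and $h$ only through $|g|$ and $|h|$, the two suprema may be exchanged freely. For the inner supremum over $g$, the very definition of the associate norm of $X'$ yields, for each fixed $h$,
\[
\sup_{\|g\|_X\le 1}\int_\Gamma|f||h||g|\,|d\tau|=\|fh\|_{X'},
\]
because $\int_\Gamma|f||h||g|\,|d\tau|$ depends on $g$ only via $|g|$ and $\||g|\|_X=\|g\|_X$. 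Substituting and taking the remaining supremum over $h$ produces exactly $\sup_{\|h\|_{Y'}\le 1}\|fh\|_{X'}=\|f\|_{M(Y',X')}$, which is the desired norm identity.

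It remains to upgrade this identity to the equivalence $M(X,Y)\equiv M(Y',X')$, and for this I would argue that $f$ lies in $M(X,Y)$ \emph{if and only if} the quantity $\|f\|_{M(X,Y)}$ is finite. The forward implication follows from the closed graph theorem applied to the multiplication map $g\mapsto fg$: convergence in a Banach function space forces $L^1(\Gamma)$-convergence (since $X,Y\hookrightarrow L^1$) and hence a.e.\ convergence along a subsequence, so the graph is closed and the operator is bounded. The converse is immediate from homogeneity, as $\|fg\|_Y\le\|f\|_{M(X,Y)}\|g\|_X<\infty$ forces $fg\in Y$ for every $g\in X$. The same characterization applies verbatim to $M(Y',X')$, so the finiteness of one multiplier norm is equivalent to the finiteness of the other; combined with the pointwise norm identity, this shows that the two spaces contain exactly the same functions with identical norms.

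I expect the only genuinely delicate point to be the reliance on the duality formula for the $Y$-norm: it is this step, rather than any of the manipulations, that encodes the Fatou property and the Lorentz--Luxemburg theorem, and it is precisely what legitimizes the otherwise purely formal interchange of suprema. Everything else amounts to careful bookkeeping in passing between absolute values and unit balls.
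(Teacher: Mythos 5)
Your argument is correct, but note that the paper contains no proof of this lemma at all: it is imported verbatim as \cite[Section~2, property~(vii)]{KLM13}, so your proposal is best viewed as a self-contained substitute for that citation rather than a variant of an in-paper argument. Your route also differs slightly from the standard one in the literature (cf.\ \cite{MP89,KLM13}), which proves the two inequalities separately: first $\|fh\|_{X'}\le\|f\|_{M(X,Y)}\|h\|_{Y'}$ directly from the definition of the associate norm via a H\"older-type estimate, giving $M(X,Y)\hookrightarrow M(Y',X')$ with norm at most one, and then the reverse embedding by applying the same estimate to the pair $(Y',X')$ and invoking the Lorentz--Luxemburg theorem $X''\equiv X$, $Y''\equiv Y$. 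Your interchange-of-suprema computation compresses both halves into a single identity and, as you observe, consumes the Fatou property exactly once, through $\rho_Y''=\rho_Y$; the inner supremum over $g$ is the definition of $\rho_X'$ and costs nothing. The bookkeeping is sound: the extended-value convention is legitimate because $\rho_Y''=\rho_Y$ holds on all of $\cM^+(\Gamma)$ (\cite[Chap.~1, Theorem~2.7]{BS88}); the identification of the two unit balls $\{g\in\cM^+(\Gamma):\rho_X(g)\le1\}$ and $\{|g|:g\in X,\ \|g\|_X\le1\}$ is harmless since (A5) forces any such $g$ to be finite a.e.; and your closed-graph step (using $X,Y\hookrightarrow L^1$ over the finite-measure curve to extract a.e.\ convergent subsequences) correctly equates set membership in $M(X,Y)$ with finiteness of the extended norm. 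What your approach buys is self-containedness and an isometry obtained in one stroke, valid for arbitrary Banach function spaces without reflexivity or separability --- exactly the generality in which the lemma is stated.
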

\begin{lemma}\label{le:multiplication-adjoint}
Let $X,Y$ be separable Banach function spaces over a rectifiable Jordan curve
$\Gamma$ and $a\in M(X,Y)$. Then the adjoint of the operator $aI\in\cL(X,Y)$ 
of multiplication by the function $a$ is the operator 
$(aI)^*=\overline{a}I\in\cL(Y',X')$. 
\end{lemma}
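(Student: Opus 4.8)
The plan is to transport the Banach-space adjoint $(aI)^*\colon Y^*\to X^*$ to the associate spaces via the canonical identifications and then compute it by hand. First I would recall that, since $X$ and $Y$ are separable, the associate spaces $X'$ and $Y'$ are isometrically isomorphic to the dual spaces $X^*$ and $Y^*$, the isomorphism sending $g\in Y'$ to the functional
\[
\phi_g(y)=\int_\Gamma y(\tau)\,\overline{g(\tau)}\,|d\tau|,\qquad y\in Y,
\]
and analogously for $X'$ and $X^*$. The complex conjugate in the target operator $\overline{a}I$ reflects precisely this sesquilinear pairing, so fixing the convention at the outset is the crucial bookkeeping step.

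Next I would check that $\overline{a}I$ really maps $Y'$ into $X'$ boundedly, so that the claimed identity is even meaningful. By Lemma~\ref{le:properties-multipliers} we have $M(X,Y)\equiv M(Y',X')$, hence $a\in M(Y',X')$. Since membership in a Banach function space, and therefore in a pointwise multiplier space, depends only on the absolute value of the function, and $|\overline{a}|=|a|$, it follows that $\overline{a}\in M(Y',X')$ with the same norm; that is, $\overline{a}I\in\cL(Y',X')$.

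The heart of the argument is a direct computation of the adjoint. For $g\in Y'$ and $f\in X$, the H\"older inequality for Banach function spaces guarantees that $af\,\overline{g}\in L^1(\Gamma)$ (because $af\in Y$ and $g\in Y'$) and that $f\,\overline{\,\overline{a}\,g\,}\in L^1(\Gamma)$ (because $f\in X$ and $\overline{a}\,g\in X'$), so both integrals below converge absolutely. Then
\[
\bigl((aI)^*\phi_g\bigr)(f)
=\phi_g(af)
=\int_\Gamma a(\tau)f(\tau)\,\overline{g(\tau)}\,|d\tau|
=\int_\Gamma f(\tau)\,\overline{\,\overline{a(\tau)}\,g(\tau)\,}\,|d\tau|
=\phi_{\overline{a}g}(f).
\]
Since $f\in X$ is arbitrary, $(aI)^*\phi_g=\phi_{\overline{a}g}$, which under the identifications $Y^*\cong Y'$ and $X^*\cong X'$ says exactly that $(aI)^*$ acts as $g\mapsto\overline{a}\,g$ on $Y'$, i.e. $(aI)^*=\overline{a}I\in\cL(Y',X')$.

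I do not expect a genuine obstacle here; the lemma is essentially a formal consequence of the H\"older inequality together with the separability-based identification $X^*\cong X'$ quoted in the excerpt. The only point requiring care is the sesquilinearity of the identifying pairing, which is what produces the conjugation $a\mapsto\overline{a}$: with the bilinear pairing $\int_\Gamma fg\,|d\tau|$ one would instead obtain $(aI)^*=aI$, so the convention must be stated explicitly and used consistently throughout.
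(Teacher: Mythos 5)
Your proposal is correct and follows essentially the same route as the paper: both rest on the separability-based isometric identification $X^*\cong X'$, $Y^*\cong Y'$ via the sesquilinear pairing $\int_\Gamma f\overline{g}\,|d\tau|$ together with Lemma~\ref{le:properties-multipliers}. The paper simply declares the conclusion ``immediate'' from these two facts, whereas you write out the one-line computation $(aI)^*\phi_g=\phi_{\overline{a}g}$ and the integrability checks explicitly --- a harmless (indeed welcome) expansion, not a different argument.
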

\begin{proof}
Since $X$ (resp., $Y$) is separable, its Banach dual space $X^*$ (resp., $Y^*$) 
is isometrically isomorphic to the the associate (K\"othe dual) space $X'$ 
(resp., $Y'$) and
\[
G(f)=\int_\Gamma f(\tau)\overline{g(\tau)}|d\tau|
\]
gives the general form of a linear functional on $X$ (resp., $Y$) and 
$\|G\|_{X^*}=\|g\|_{X'}$ (resp., $\|G\|_{Y^*}=\|g\|_{Y'}$),
see, e.g, \cite[Chap.~1, Corollary 4.3]{BS88}. The desired statement follows
immediately from the above observation and 
Lemma~\ref{le:properties-multipliers}.
\end{proof}
\subsection{Hardy type subspaces of a Banach function space}
Suppose $X$ is a reflexive Banach function space in which the Cauchy
singular integral operator $S$ is bounded. Put
\[
X_+:=PX,
\quad X_-^0:=QX,\quad X_-:=X_-^0\oplus\C.
\]
The corresponding subspaces $X_+'$, $(X')_-^0$, $X_-'$ are defined analogously.

For $f\in X\subset L^1$, consider the Cauchy type integrals
\[
(C_\pm f)(z):=\frac{1}{2\pi i}\int_\Gamma\frac{f(\tau)}{\tau-z}d\tau,
\quad z\in D^\pm.
\]
It is well known \cite[p. 189]{P50} that the functions $(C_\pm f)(z)$ are 
analytic in $D^\pm$, they have nontangential boundary values $(C_\pm f)(t)$ as 
$z\to t$ almost everywhere on $\Gamma$. These boundary values can be found by
the Sokhotsky-Plemelj formulas
\[
(C_\pm f)(t)=
\frac{1}{2}f(t)\pm \frac{1}{2\pi i}\int_\Gamma\frac{f(\tau)}{\tau-t}d\tau,
\]
that is,
\[
(C_+ f)(t)=(P f)(t),\quad (C_-f)(t)=(Qf)(t).
\]
Since the function $f\in X_+$ (respectively, $f\in X_-^0$) coincides on 
$\Gamma$ with the boundary value of the function $C_+f$ (respectively, $C_-f$) 
defined in $D^+$ (respectively, $D^-$), we will think of functions from $X_+$ 
(respectively, $X_-^0$) as of functions defined in $D^+$ (respectively, in 
$D^-$) by $f(z):=(C_+f)(z)$ (respectively, by $f(z):=(C_-f)(z)$).
\begin{lemma}[{\cite[Lemma~6.9]{K03}}]
\label{le:subspaces-properties}
Let $\Gamma$ be a rectifiable Jordan curve and $X$ be a reflexive Banach 
function space in which the Cauchy singular integral operator $S$ is bounded.
\begin{enumerate}
\item[{\rm (a)}] 
If $f\in X_\pm$ and $g\in X_\pm'$, then $fg\in L^1_\pm$.
If, in addition, $f\in X_-^0$ or $g\in (X')_-^0$, then
$fg\in (L^1)_-^0$.

\item[{\rm (b)}] We have
\[
X_+=L^1_+\cap X,\quad
X_-^0=(L^1)_-^0\cap X,\quad
X_-=L^1_-\cap X.
\]
\end{enumerate}
\end{lemma}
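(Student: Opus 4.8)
The plan is to read the moment conditions that define $L^1_+$, $(L^1)_-^0$ and $L^1_-$ through the analytic extensions $C_\pm$ and the Smirnov classes of the domains $D^\pm$, and then to extract (a) and (b) from the reproducing and multiplicative properties of these classes. The classical facts I would draw on, all available in \cite{P50}, are: for $h\in L^1$ the Cauchy integral $C_\pm h$ lies in the Smirnov class $E^1(D^\pm)$ and has nontangential boundary values $C_+h|_\Gamma=Ph$, $C_-h|_\Gamma=Qh$; every $F\in E^1(D^+)$ equals the Cauchy integral $C_+(F|_\Gamma)$ of its boundary values, and the corresponding reproducing formula holds in $D^-$; Cauchy's theorem $\int_\Gamma F|_\Gamma\,d\tau=0$ holds for $F\in E^1(D^+)$ and for $F\in E^1(D^-)$ with $F(\infty)=0$; the Smirnov class $N^+(D^\pm)$ is an algebra; and, by Smirnov's theorem, an $N^+$-function whose boundary values are integrable already belongs to $E^1$.

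First I would set up the dictionary between moments and projections. Since $0\in D^+$, expanding $C_-h$ in powers of $1/z$ near infinity and $C_+h$ in powers of $z$ near the origin identifies the coefficients with the moments $\int_\Gamma h(\tau)\tau^n\,d\tau$. Hence if $h\in L^1_+$ then $C_-h$ vanishes near infinity and so, by connectedness of $D^-$, $C_-h\equiv0$ and $Qh=0$; symmetrically $h\in(L^1)_-^0$ gives $C_+h\equiv0$ and $Ph=0$. In the other direction, applying Cauchy's theorem to $F\tau^n\in E^1(D^+)$ shows that the nonnegative moments of $F|_\Gamma$ vanish, so the boundary values of any $F\in E^1(D^+)$ lie in $L^1_+$; the analogous statement in $D^-$ holds, the value $F(\infty)$ accounting for the constant summand in $L^1_-=(L^1)_-^0\oplus\C$.

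With this dictionary part (b) becomes bookkeeping. If $f\in X_+=PX$, then $f=C_+\phi|_\Gamma$ with $C_+\phi\in E^1(D^+)$, so $f\in L^1_+$, and $f\in X$ is immediate; conversely, if $f\in L^1_+\cap X$ then $Qf=0$ by the dictionary, and since $P+Q=I$ on $X$ we get $f=Pf\in PX$. Thus $X_+=L^1_+\cap X$, and replacing $P$ by $Q$ and $C_+$ by $C_-$ gives $X_-^0=(L^1)_-^0\cap X$ in the same way. For $X_-=L^1_-\cap X$ one splits $f=h+c$ with $h\in(L^1)_-^0$ and $c\in\C\subset X$, notes $h=f-c\in X$, hence $h\in X_-^0$, and reads off $f\in X_-^0\oplus\C=X_-$; the reverse inclusion is immediate.

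Part (a) carries the analytic content. Given $f\in X_+$ and $g\in X_+'$, part (b) applied to the reflexive spaces $X$ and $X'$ (on both of which $S$ is bounded, by \cite[Lemma~6.4]{K03}) gives $f,g\in L^1_+$, so $f=F|_\Gamma$ and $g=G|_\Gamma$ with $F,G\in E^1(D^+)\subset N^+(D^+)$. Because $N^+(D^+)$ is an algebra, $FG\in N^+(D^+)$, and its boundary values equal $fg$, which lies in $L^1$ by the H\"older inequality for Banach function spaces (\cite[Chap.~1, Theorem~2.4]{BS88}) since $f\in X$ and $g\in X'$. Smirnov's theorem then returns $FG$ to $E^1(D^+)$, so $fg=(FG)|_\Gamma\in L^1_+$. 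The $-$ case is identical in $D^-$, keeping track of the values at infinity: if in addition $f\in X_-^0$ or $g\in(X')_-^0$, the relevant extension vanishes at infinity, so $(FG)(\infty)=0$ and $fg\in(L^1)_-^0$. I expect this last step to be the main obstacle: a product of two $E^1$ functions is guaranteed only to lie in the Nevanlinna--Smirnov class $N^+$, and it is precisely Smirnov's theorem, fed by the integrability of $fg$ coming from the H\"older inequality, that pushes it back into $E^1$ --- everything else reduces to the moment/projection dictionary.
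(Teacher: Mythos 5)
The paper itself does not prove this lemma---it is quoted verbatim from \cite[Lemma~6.9]{K03}---so your attempt has to be judged against the argument available in that setting. Your moment/projection dictionary, and with it part (b), is essentially sound, except that it leans on the blanket claim that $C_\pm h\in E^1(D^\pm)$ for every $h\in L^1$, which is false: the boundary values $Ph$ of $C_+h$ need not be integrable, and even integrable boundary values do not yield $E^1$-membership without Smirnov-type input. Part (b) is easily repaired without any $E^1$ theory: if $f=Pf\in X$, then $Qf=0$ a.e., so $C_-f$ has vanishing nontangential limits a.e.\ on $\Gamma$ and hence $C_-f\equiv 0$ by the Lusin--Privalov uniqueness theorem; the Laurent expansion of the Cauchy kernel at $\infty$ then kills all moments $\int_\Gamma f(\tau)\tau^n\,d\tau$, $n\ge 0$, and the converse is your dictionary.

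The genuine gap is in part (a), exactly at the step you flagged as the main obstacle. Your mechanism is $E^1(D^\pm)\subset N^+(D^\pm)$, $N^+$ is an algebra, then Smirnov's theorem. But the inclusion $E^1(D)\subset N^+(D)$ holds precisely when $D$ is a \emph{Smirnov domain}, and by Keldysh--Lavrentiev and Duren--Shapiro--Shields there exist rectifiable Jordan curves that are not of Smirnov type: if the conformal map $\varphi$ of the unit disc onto $D^+$ has derivative $\varphi'=I$ a singular inner function, then $F:=(1/I)\circ\varphi^{-1}$ satisfies $(F\circ\varphi)\varphi'=1\in H^1$, so $F\in E^1(D^+)$ with $|F|=1$ a.e.\ on $\Gamma$, yet $F\notin N^+(D^+)$ and already the zeroth moment of $f^2:=(F|_\Gamma)^2$ is $\int_\Gamma f^2\,d\tau=\int_{\mathbb{T}}\overline{I(w)}\,dw=2\pi i\,\overline{\hat I(1)}\neq 0$. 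Thus $f\in L^1_+\cap L^\infty$ while $f\cdot f\notin L^1_+$: the implication your proof rests on (boundary values of $E^1$ functions with integrable product have product in $L^1_+$) is simply false at the generality you invoke, namely rectifiability. This does not contradict the lemma, since on such a curve $S$ is not bounded on any admissible $X$; but it shows that the hypothesis $S\in\cL(X)$ must do real analytic work in (a), whereas your proposal uses it nowhere beyond (b), and the hypotheses only guarantee that $\Gamma$ is Carleson---you neither prove nor cite that Carleson curves are Smirnov. The repair available within the paper's own toolkit bypasses Smirnov theory: Lemma~\ref{le:projections-adjoints} is equivalent to the bilinear antisymmetry $\int_\Gamma (Sf)g\,d\tau=-\int_\Gamma f(Sg)\,d\tau$ for $f\in X$, $g\in X'$, hence $\int_\Gamma (Pf)h\,d\tau=\int_\Gamma f(Qh)\,d\tau$. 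For $f\in X_+$, $g\in (X')_+$ and $n\ge 0$, part (b) gives $\tau^n g\in L^1_+\cap X'=(X')_+$, so
\[
\int_\Gamma fg\,\tau^n\,d\tau=\int_\Gamma (Pf)(\tau^n g)\,d\tau
=\int_\Gamma f\,Q(\tau^n g)\,d\tau=0,
\]
i.e.\ $fg\in L^1_+$; the minus cases, including the refinement to $(L^1)_-^0$, are the same bookkeeping with $n\le -2$ (resp.\ $n\le -1$) after splitting off the constant summand. Your approach is fine on Smirnov curves (the unit circle, Lyapunov or chord-arc curves), but as written it does not prove the lemma as stated.
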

\subsection{Adjoint operators of the projections $P$ and $Q$}
On a rectifiable Jordan oriented curve $\Gamma$, we have 
\[
d\tau=e^{i\theta_\Gamma(\tau)}|d\tau|,
\]
where $\theta_\Gamma(\tau)$ is the angle made by the positively oriented real
axis and the naturally oriented tangent of $\Gamma$ at $\tau$ (which exists
almost everywhere). Let $X$ be a Banach function space over $\Gamma$. Define
the operator $H_\Gamma:X\to X$ by 
\[
(H_\Gamma f)(\tau):=e^{-i\theta_\Gamma(\tau)}\overline{f(\tau)}.
\]
Note that the operator $H_\Gamma$ is additive but 
$H_\Gamma(\alpha f)=\overline{\alpha}\cdot H_\Gamma f$ for $\alpha\in\C$ 
and $f\in X$. It is clear that $H_\Gamma$ is bounded on $X$ and $H_\Gamma^2=I$.
\begin{lemma}[{\cite[Lemma~6.6]{K03}}]
\label{le:projections-adjoints}
Let $\Gamma$ be a rectifiable Jordan curve and $X$ be a reflexive Banach 
function space in which the Cauchy singular integral operator $S$ is bounded.
Then the adjoint of $S\in\cL(X)$ is the operator 
$S^*=-H_\Gamma SH_\Gamma\in\cL(X')$ and consequently,
\[
P^*=H_\Gamma QH_\Gamma,
\quad
Q^*=H_\Gamma PH_\Gamma.
\]
\end{lemma}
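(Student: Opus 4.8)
The plan is to compute the Banach adjoint of $S\in\cL(X)$ directly, using the identification of $X^*$ with the associate space $X'$ recorded in the proof of Lemma~\ref{le:multiplication-adjoint}, and then to read off $P^*$ and $Q^*$ by linearity. Since $X$ is reflexive it is separable, so every bounded functional on $X$ has the form $G_g(f)=\int_\Gamma f(\tau)\overline{g(\tau)}\,|d\tau|$ for a unique $g\in X'$, and $S^*$ is characterized by $G_{S^*g}=G_g\circ S$. My first step would be to trade this conjugate-linear pairing for the bilinear one $\langle u,v\rangle:=\int_\Gamma u(\tau)v(\tau)\,d\tau$. Using $d\tau=e^{i\theta_\Gamma(\tau)}|d\tau|$ and the definition $(H_\Gamma g)(\tau)=e^{-i\theta_\Gamma(\tau)}\overline{g(\tau)}$, one gets the identity
\[
G_g(f)=\int_\Gamma f(\tau)\,e^{-i\theta_\Gamma(\tau)}\overline{g(\tau)}\,d\tau=\langle f,H_\Gamma g\rangle,
\]
which turns the whole computation into a question about $\langle\cdot,\cdot\rangle$.

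The heart of the matter is the skew-symmetry of the Cauchy singular integral with respect to this bilinear form, namely
\[
\int_\Gamma (Sf)(\tau)\,h(\tau)\,d\tau=-\int_\Gamma f(\tau)\,(Sh)(\tau)\,d\tau,
\qquad f\in X,\ h\in X'.
\]
Formally this is just Fubini: writing both sides as the double integral of $f(\tau)h(t)/(\tau-t)$ and interchanging the order of integration produces the minus sign because the Cauchy kernel is odd. The hard part will be to justify this interchange in the principal-value sense. I would do it by density. On simple functions, which lie in $L^2(\Gamma)$ and are dense in both separable spaces $X$ and $X'$ (see \cite{BS88}), the relation is a classical property of the Cauchy singular integral on a Carleson curve, where $S$ is bounded on $L^2(\Gamma)$; it is obtained by interchanging the order of integration for a dense class of rational or H\"older-continuous functions and extending. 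Both sides of the relation are continuous in $(f,h)\in X\times Y$ with $Y=X'$: the bilinear pairing is continuous there by the H\"older inequality for Banach function spaces (\cite[Chap.~1, Theorem~2.4]{BS88}), while $S$ is bounded on $X$ and on $X'$. Hence the skew-symmetry relation propagates from simple functions to all of $X\times X'$.

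Granting the skew-symmetry relation, the rest is bookkeeping. Applying it with $h=H_\Gamma g\in X'$ gives $G_g(Sf)=\langle Sf,H_\Gamma g\rangle=-\langle f,SH_\Gamma g\rangle$. Since $-1$ is real it passes through the conjugate-linear map $H_\Gamma$, so by $H_\Gamma^2=I$ we have $-SH_\Gamma g=H_\Gamma(-H_\Gamma SH_\Gamma g)$, whence $G_g(Sf)=\langle f,H_\Gamma(-H_\Gamma SH_\Gamma g)\rangle=G_{-H_\Gamma SH_\Gamma g}(f)$. By uniqueness of the representing function this yields $S^*g=-H_\Gamma SH_\Gamma g$, that is $S^*=-H_\Gamma SH_\Gamma\in\cL(X')$; boundedness is clear since $H_\Gamma$ is bounded on $X'$ and $S\in\cL(X')$. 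Finally, from $P=(I+S)/2$ and $Q=(I-S)/2$ I would compute $P^*=(I+S^*)/2=(I-H_\Gamma SH_\Gamma)/2$ and compare it with $H_\Gamma QH_\Gamma=\frac12 H_\Gamma(I-S)H_\Gamma=\frac12(I-H_\Gamma SH_\Gamma)$, the real scalar $\frac12$ again passing freely through $H_\Gamma$; this gives $P^*=H_\Gamma QH_\Gamma$, and symmetrically $Q^*=H_\Gamma PH_\Gamma$.
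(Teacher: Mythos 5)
The paper offers no proof of this lemma: it is imported verbatim from \cite[Lemma~6.6]{K03}, so there is no in-paper argument to compare against. Your proof is correct and is essentially the standard argument underlying that citation: identify $X^*$ with $X'$ (legitimate because reflexivity forces both $X$ and $X'$ to be separable), convert the conjugate-linear pairing $G_g(f)=\int_\Gamma f\overline{g}\,|d\tau|$ into the bilinear pairing $\langle f,h\rangle=\int_\Gamma fh\,d\tau$ via $H_\Gamma$ and $d\tau=e^{i\theta_\Gamma(\tau)}|d\tau|$, and invoke the skew-symmetry $\int_\Gamma (Sf)h\,d\tau=-\int_\Gamma f(Sh)\,d\tau$. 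Two points deserve the care you partly give them. First, your continuity-and-density step uses $S\in\cL(X')$; as a standalone argument this would risk circularity (the boundedness of $S$ on $X'$ is often derived \emph{from} the adjoint formula), but it is legitimate in this paper's setting because \cite[Lemma~6.4]{K03} ($P$ and $Q$ bounded on both $X$ and $X'$) is already quoted before the lemma, and likewise the Carleson property of $\Gamma$, which you use to get $S\in\cL(L^2)$, is recorded in the introduction via \cite[Theorem~6.1]{K03}. Second, the limit passage from the truncated double integrals (where Fubini is elementary) to the principal values rests on nontrivial classical facts about Cauchy integrals on Carleson curves; you correctly flag this and reduce it to a dense class of nice functions rather than claiming it for free. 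The conjugate-linearity bookkeeping with $H_\Gamma$ (using $H_\Gamma(\alpha f)=\overline{\alpha}\,H_\Gamma f$, $H_\Gamma^2=I$, and that the scalars $-1$ and $\tfrac12$ are real) and the final deduction $P^*=H_\Gamma QH_\Gamma$, $Q^*=H_\Gamma PH_\Gamma$ from $P=(I+S)/2$, $Q=(I-S)/2$ are all accurate.
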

\section{Proof of the main results}\label{sec:proofs}
\subsection{Companion operator of a Toeplitz operator}
Let $X$ and $Y$ be reflexive Banach function spaces over a rectifiable
Jordan curve $\Gamma$. Suppose $a\in M(X,Y)\equiv M(Y',X')$ and the operator 
$S$ is bounded on $X$ and on $Y$. In view of 
Lemma~\ref{le:projections-adjoints}, the operator $S$ is also bounded on $Y'$ 
and on $X'$. Then, along with the Toeplitz operator $T(a):X_+\to Y_+$, we 
consider its companion operator $\widetilde{T}(a):(Y')_-^0\to (X')_-^0$ defined
by
\[
\widetilde{T}(a)f=Q(af),\quad f\in (Y')_-^0.
\]
It is obvious that $\widetilde{T}(a)\in\cL((Y')_-^0,(X')_-^0)$ and
\[
\|\widetilde{T}(a)\|_{\cL((Y')_-^0,(X')_-^0)}\le\|Q\|_{\cL(X')}\|a\|_{M(X,Y)}.
\]
\begin{lemma}\label{le:Toeplitz-companion}
Let $X$ and $Y$ be reflexive Banach function spaces over a rectifiable
Jordan curve. Suppose $X\hookrightarrow Y$ and the Cauchy singular 
integral operator $S$ given by \eqref{eq:def-S} is bounded on $X$ and on $Y$.
If $a\in M(X,Y)$, then the Toeplitz operator $T(a):X_+\to Y_+$ has a trivial
kernel in $X_+$ (resp., a dense image in $Y_+$) if and only if its
companion operator $\widetilde{T}(a):(Y')_-^0\to (X')_-^0$ has a dense
image in $(X')_-^0$ (resp., a trivial kernel in $(Y')_-^0$).
\end{lemma}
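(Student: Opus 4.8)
The plan is to establish a duality dictionary between the Toeplitz operator $T(a):X_+\to Y_+$ and its companion $\widetilde{T}(a):(Y')_-^0\to(X')_-^0$, and then invoke the standard functional-analytic principle relating the triviality of the kernel of one operator to the density of the image of its adjoint. The crucial algebraic identity to establish is that, up to the conjugation $H_\Gamma$, the companion operator $\widetilde{T}(a)$ is essentially the adjoint of $T(a)$. I expect the proof to rest on a bilinear pairing computation combined with the projection adjoint formulas from Lemma~\ref{le:projections-adjoints}.

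First I would set up the pairing. Since $X$ and $Y$ are reflexive (hence separable), their associate spaces realize the duals, and by Lemma~\ref{le:multiplication-adjoint} the adjoint of $aI\in\cL(X,Y)$ is $\overline{a}I\in\cL(Y',X')$. The Toeplitz operator factors as $T(a)=P\circ(aI)|_{X_+}$, so its adjoint should factor as $(aI)^*\circ P^*=\overline{a}I\circ H_\Gamma Q H_\Gamma$, using $P^*=H_\Gamma Q H_\Gamma$ from Lemma~\ref{le:projections-adjoints}. The next step is to recognize that conjugating by the involution $H_\Gamma$ converts multiplication by $\overline{a}$ and the projection $Q$ back into the shape of the companion operator. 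Concretely, I would verify an identity of the form
\[
H_\Gamma\,\widetilde{T}(a)\,H_\Gamma = (T(a))^*
\]
after the appropriate identifications of the subspaces $(Y')_-^0$ and $(X')_-^0$ with the relevant quotient or subspace structure coming from $PX$ and $PY$. Because $H_\Gamma$ is a bounded involution (though conjugate-linear), it maps closed subspaces to closed subspaces and preserves both triviality of kernels and density of images, so it transports all the relevant properties faithfully.

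Once the operator $\widetilde{T}(a)$ is identified (up to the harmless conjugation $H_\Gamma$) with the Banach-space adjoint $(T(a))^*$, the statement reduces to the classical duality facts: for a bounded operator $B\in\cL(E,F)$ between Banach spaces, $B$ has dense image if and only if $B^*$ has trivial kernel, and $B$ has trivial kernel if and only if $B^*$ has weak-$*$ dense — hence, by reflexivity, norm-dense — image. Here reflexivity of $X$ and $Y$ (and thus of the Hardy subspaces $X_+,Y_+$ as closed subspaces of reflexive spaces) is exactly what upgrades weak-$*$ density to genuine density and makes the equivalence symmetric. I would then read off the two desired equivalences: triviality of $\Ker T(a)$ corresponds to density of $\Image\widetilde{T}(a)$, and density of $\Image T(a)$ corresponds to triviality of $\Ker\widetilde{T}(a)$.

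The main obstacle I anticipate is the bookkeeping in matching the adjoint of $P$ restricted to the subspace $Y_+=PY$ with the projection $Q$ acting on $(Y')_-^0$, and similarly on the $X$ side. One must be careful that the adjoint of $T(a):X_+\to Y_+$ is taken between the duals $(Y_+)^*$ and $(X_+)^*$, and then identify these duals with the concrete subspaces $(Y')_-^0$ and $(X')_-^0$ of $Y'$ and $X'$ via the pairing and the decomposition in Lemma~\ref{le:subspaces-properties}(b). The formulas $P^*=H_\Gamma Q H_\Gamma$ and $Q^*=H_\Gamma P H_\Gamma$ are precisely the tools that make these identifications work, but threading the conjugate-linearity of $H_\Gamma$ correctly through the pairing is the delicate point where sign or conjugation errors are most likely to creep in.
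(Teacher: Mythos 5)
Your proposal is correct in substance, but it follows a genuinely different route from the paper. The paper never dualizes the subspace operator $T(a):X_+\to Y_+$ directly; instead it extends $T(a)$ and $\widetilde{T}(a)$ to operators on the \emph{whole} spaces, namely $PaP+Q\in\cL(X,Y)$ and $P+QaQ\in\cL(Y',X')$, reads off
\[
\Ker(PaP+Q)=\Ker T(a),\quad \Image(PaP+Q)=\Image T(a)\oplus Y_-^0,
\]
(and the analogous identities for $P+QaQ$) from their block-diagonal matrix forms with respect to $X=X_+\oplus X_-^0$ and $Y=Y_+\oplus Y_-^0$, and then verifies the single full-space identity $(PaP+Q)^*=H_\Gamma(P+QaQ)H_\Gamma$ via Lemmas~\ref{le:multiplication-adjoint} and~\ref{le:projections-adjoints}, applying Rudin's duality and reflexivity to these full-space operators. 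This device buys exactly what you flag as your main obstacle: since $PaP+Q$ and $P+QaQ$ act between Banach function spaces whose duals are the associate spaces, no identification of $(X_+)^*$ or $(Y_+)^*$ is ever needed. Your route, by contrast, makes that identification a genuine proof obligation: one must compute the annihilator $(Y_+)^\perp=\Ker P^*=H_\Gamma (Y')_+$ inside $Y'\cong Y^*$, conclude $(Y_+)^*\cong Y'/H_\Gamma(Y')_+\cong H_\Gamma(Y')_-^0$ (using that $H_\Gamma$ preserves the decomposition $Y'=(Y')_+\oplus(Y')_-^0$), and check that under these identifications $T(a)^*$ is indeed the $H_\Gamma$-conjugate of $\widetilde{T}(a)$; this computation does go through, since for $k\in(Y')_-^0$ one gets $\overline{a}P^*(H_\Gamma k)=H_\Gamma(ak)$ and projecting onto $H_\Gamma(X')_-^0$ yields $H_\Gamma Q(ak)=H_\Gamma\widetilde{T}(a)k$. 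The remaining ingredients of your plan — Rudin's Section~4.12 dualities, reflexivity of closed subspaces of reflexive spaces to upgrade weak-$*$ density to norm density, and the fact that the isometric conjugate-linear involution $H_\Gamma$ preserves triviality of kernels and density of images — are exactly right and are the same tools the paper uses. In short, your argument closes, at the cost of the quotient/annihilator bookkeeping that the paper's $PaP+Q$, $P+QaQ$ trick is specifically designed to avoid.
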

\begin{proof}
Let $\Image A$ and $\Ker A$ denote the image and the kernel, respectively, of 
a bounded linear operator $A$ acting between Banach spaces.

Since $X\hookrightarrow Y$, we have $Q\in\cL(X,Y)$ and $PaP+Q\in\cL(X,Y)$.
The spaces $X$ and $Y$ decompose into the direct sums $X=X_+\oplus X_-^0$
and $Y=Y_+\oplus Y_-^0$. Accordingly, the operator $PaP+Q$ may be written as
an operator matrix
\[
\begin{pmatrix}
T(a) & 0 \\ 0 & I
\end{pmatrix}
:
\begin{pmatrix}
X_+ \\ X_-^0
\end{pmatrix}
\to
\begin{pmatrix}
Y_+ \\ Y_-^0
\end{pmatrix}.
\]
Hence
\begin{equation}\label{eq:Toeplitz-companion-1}
\Image(PaP+Q)=\Image T(a)\oplus Y_-^0,
\quad
\Ker (PaP+Q)=\Ker T(a).
\end{equation}

On the other hand, $Y'\hookrightarrow X'$ by Lemma~\ref{le:duality-embedding}
and $a\in M(Y',X')$ by Lemma~\ref{le:properties-multipliers}.
Then $P\in\cL(Y',X')$ and $P+QaQ\in\cL(Y',X')$. Since the spaces $Y'$ and $X'$
decompose into the direct sums $Y'=(Y')_+\oplus (Y')_-^0$ and
$X'=(X')_+\oplus (X')_-^0$, the operator $P+QaQ$ may be written as an operator
matrix
\[
\begin{pmatrix}
I & 0 \\ 0 & \widetilde{T}(a)
\end{pmatrix}
:
\begin{pmatrix}
(Y')_+ \\ (Y')_-^0
\end{pmatrix}
\to
\begin{pmatrix}
(X')_+ \\ (X')_-^0
\end{pmatrix}.
\]
Therefore
\begin{equation}\label{eq:Toeplitz-companion-2}
\Image(P+QaQ)=(X')_+\oplus\Image \widetilde{T}(a),
\quad
\Ker (P+QaQ)=\Ker \widetilde{T}(a).
\end{equation}

Lemmas~\ref{le:multiplication-adjoint} and~\ref{le:projections-adjoints}
yield
\begin{align}
(PaP+Q)^*
&=
P^*\overline{a}P^*+Q^*
=
(H_\Gamma QH_\Gamma)(H_\Gamma a H_\Gamma)(H_\Gamma QH_\Gamma)+H_\Gamma PH_\Gamma
\nonumber\\
&=
H_\Gamma(P+QaQ)H_\Gamma.
\label{eq:Toeplitz-companion-3}
\end{align}

From the second identity in \eqref{eq:Toeplitz-companion-1} it follows that
$T(a)\in\cL(X_+,Y_+)$ has a trivial kernel in $X_+$ if and only if
$PaP+Q\in\cL(X,Y)$ has a trivial kernel in $X$. On the other hand, from
\eqref{eq:Toeplitz-companion-3} and $H_\Gamma^2=I$ we deduce that the latter
fact is equivalent to the fact that $P+QaQ\in\cL(Y',X')$ has a dense
image in $X'$ (see, e.g., \cite[Section~4.12]{R91}). In turn, in view of
the first identity in \eqref{eq:Toeplitz-companion-2}, the operator
$P+QaQ$ has a dense image in $X'$ if and only if the operator 
$\widetilde{T}(a)\in\cL((Y')_-^0,(X')_-^0)$ has a dense image in 
$(X')_-^0$.

The proof of the equivalence of the density of the image of $T(a)$ in $Y_+$
and the triviality of the kernel of $\widetilde{T}(a)$ in $(Y')_-^0$ is 
analogous.
\end{proof}
\subsection{Proof of Theorem~\ref{th:main}}	
In view of Lemma~\ref{le:Toeplitz-companion}, it is sufficient to show that 
$T(a):X_+\to Y_+$ is injective on $X_+$ or 
$\widetilde{T}(a):(Y')_-^0\to (X')_-^0$ is injective on $(Y')_-^0$.

Assume the contrary, that is, that there exist $f_+\in X_+$ and
$g_-\in (Y')_-^0$ such that $f_+\ne 0$, $g_-\ne 0$, and
\begin{equation}\label{eq:proof-1}
Paf_+=0,
\quad
Qag_-=0.
\end{equation}
By Lemma~\ref{le:subspaces-properties}(b), $f_+\in X_+\subset L_+^1$ and 
$g_-\in (Y')_0^-\subset L_-^1$. Since $f_+\ne 0$ and $g_-\ne 0$, from
the Lusin-Privalov Theorem~\ref{th:Lusin-Privalov} it follows that
$f_+\ne 0$ a.e. on $\Gamma$ and $g_-\ne 0$ a.e. on $\Gamma$.

Put $f_-:=af_+$ and $g_+:=ag_-$. Then from \eqref{eq:proof-1} it follows that
$Paf_+=Pf_-=0$ and $Qag_-=Qg_+=0$. Therefore,
\begin{align*}
f_-
&=
af_+=Paf_++Qaf_+=Qaf_+\in Y_-^0,
\\
g_+
&=
ag_-=Pag_-+Qag_-=Pag_-\in (X')_+.
\end{align*}
Then
\begin{equation}\label{eq:proof-2}
f_+g_+=f_+(ag_-)=(f_+a)g_-=f_-g_-.
\end{equation}
From Lemma~\ref{le:subspaces-properties}(a) we deduce that $f_+g_+\in L_+^1$
and $f_-g_-\in (L^1)_-^0$. Lemma~\ref{le:Privalov} and identity
\eqref{eq:proof-2} imply that $f_+g_+=f_-g_-=f_+ag_-=0$. Since
$f_+\ne 0$ a.e. on $\Gamma$ and $g_-\ne 0$ a.e. on $\Gamma$,
we conclude that $a=0$ a.e. on $\Gamma$, but this contradicts our hypothesis 
and, thus, completes the proof.
\qed
\section{Toeplitz operators between Hardy type subspaces\\ of variable Lebesgue spaces}
\label{sec:VLS}
\subsection{Variable Lebesgue spaces}
Given a rectifiable Jordan curve $\Gamma$ , let $\cP(\Gamma)$ be the set of all 
measurable functions $p:\Gamma\to[1,\infty]$. For $p\in\cP(\Gamma)$
and a measurable subset $\gamma\subset\Gamma$, put
\[
\gamma_\infty^{p(\cdot)} :=\{t\in\gamma: p(t)=\infty\}.
\]
For a measurable function $f:\gamma\to\C$, consider
\[
\varrho_{p(\cdot),\gamma}(f)
:=
\int_{\gamma\setminus\gamma_\infty^{p(\cdot)}}|f(t)|^{p(t)}|dt|
+\|f\|_{L^\infty(\gamma_\infty^{p(\cdot)})}.
\]
According to \cite[Definition~2.9]{CF13}, the variable Lebesgue space
$L^{p(\cdot)}(\gamma)$ is defined as the set of all measurable functions
$f:\gamma\to\C$ such that $\varrho_{p(\cdot),\gamma}(f/\lambda)<\infty$
for some $\lambda>0$. This space is a Banach function space with respect
to the Luxemburg-Nakano norm given by
\[
\|f\|_{L^{p(\cdot)}(\gamma)}:=\inf\{\lambda>0: \varrho_{p(\cdot),\gamma}(f/\lambda)\le 1\}
\]
(see, e.g., \cite[Theorems~2.17, 2.71 and Section~2.10.3]{CF13}). 
If $p\in\cP(\Gamma)$ is constant, then $L^{p(\cdot)}(\gamma)$ is nothing
but the standard Lebesgue space $L^p(\gamma)$.
Variable Lebesgue spaces are often called Nakano spaces. We refer
to Maligranda's paper \cite{M11} for the role of Hidegoro Nakano in the 
study of variable Lebesgue spaces. 

The  following property of the unit ball of variable Lebesgue spaces is well 
known (see, e.g., \cite[Corollary~2.22]{CF13}).
\begin{lemma}\label{le:VLS-unit-ball-property}
Let $\gamma$ be a measurable subset of a rectifiable Jordan curve $\Gamma$.
If $p\in\cP(\Gamma)$ and $f$ is a measurable function on $\gamma$, then
the inequalities $\varrho_{p(\cdot),\gamma}(f)\le 1$ and 
$\|f\|_{L^{p(\cdot)}(\gamma)}\le 1$ are equivalent.
\end{lemma}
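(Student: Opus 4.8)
The plan is to unwind the definition of the Luxemburg--Nakano norm and to exploit the monotonicity and the monotone--convergence behaviour of the modular $\varrho_{p(\cdot),\gamma}$. Write $\mu(\lambda):=\varrho_{p(\cdot),\gamma}(f/\lambda)$ for $\lambda>0$, so that by definition $\|f\|_{L^{p(\cdot)}(\gamma)}=\inf\{\lambda>0:\mu(\lambda)\le 1\}$. The forward implication is immediate: if $\varrho_{p(\cdot),\gamma}(f)=\mu(1)\le 1$, then $\lambda=1$ belongs to the set over which the infimum is taken, whence $\|f\|_{L^{p(\cdot)}(\gamma)}\le 1$.

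For the converse I would first record that $\mu$ is nonincreasing on $(0,\infty)$. Indeed, for each fixed $t$ the map $\lambda\mapsto|f(t)|^{p(t)}/\lambda^{p(t)}$ is nonincreasing because $p(t)\ge 1$, while $\|f/\lambda\|_{L^\infty(\gamma_\infty^{p(\cdot)})}=\|f\|_{L^\infty(\gamma_\infty^{p(\cdot)})}/\lambda$ is nonincreasing as well, so both contributions to $\mu(\lambda)$ decrease as $\lambda$ grows. Consequently $\{\lambda>0:\mu(\lambda)\le 1\}$ is a half-line, and from the definition of the infimum one gets $\mu(\lambda)\le 1$ for every $\lambda>\|f\|_{L^{p(\cdot)}(\gamma)}$. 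Now assume $\|f\|_{L^{p(\cdot)}(\gamma)}\le 1$ and put $\lambda_n:=1+1/n$. Since $\lambda_n>1\ge\|f\|_{L^{p(\cdot)}(\gamma)}$, the previous observation yields $\mu(\lambda_n)\le 1$ for all $n\in\N$. As $n\to\infty$ we have $1/\lambda_n\uparrow 1$, hence $|f(t)|^{p(t)}/\lambda_n^{p(t)}\uparrow|f(t)|^{p(t)}$ pointwise a.e. on $\gamma\setminus\gamma_\infty^{p(\cdot)}$ and $\|f\|_{L^\infty(\gamma_\infty^{p(\cdot)})}/\lambda_n\uparrow\|f\|_{L^\infty(\gamma_\infty^{p(\cdot)})}$. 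Applying the monotone convergence theorem to the integral term and the elementary limit to the $L^\infty$ term gives $\mu(\lambda_n)\uparrow\varrho_{p(\cdot),\gamma}(f)$, so that $\varrho_{p(\cdot),\gamma}(f)=\lim_{n\to\infty}\mu(\lambda_n)\le 1$, as required.

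The only delicate point is the boundary case $\|f\|_{L^{p(\cdot)}(\gamma)}=1$, where one cannot simply substitute $\lambda=1$ into the infimum: here the equality genuinely relies on the monotone--convergence (Fatou-type) property of the modular, which is precisely what allows passing from $\mu(\lambda_n)\le 1$ with $\lambda_n\downarrow 1$ to $\varrho_{p(\cdot),\gamma}(f)\le 1$. I expect the verification that $\mu$ is nonincreasing and that $\mu(\lambda)\to\varrho_{p(\cdot),\gamma}(f)$ as $\lambda\downarrow 1$, together with the correct handling of the $L^\infty$ contribution coming from $\gamma_\infty^{p(\cdot)}$, to be the main technical obstacle; everything else is a direct rewriting of the definitions.
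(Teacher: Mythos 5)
Your proof is correct. Note that the paper does not actually prove this lemma: it is quoted directly from the literature (Cruz-Uribe and Fiorenza \cite{CF13}, Corollary 2.22), so there is no in-paper argument to compare against. Your argument --- the trivial forward implication, the monotonicity of $\lambda\mapsto\varrho_{p(\cdot),\gamma}(f/\lambda)$, and the monotone convergence theorem applied along $\lambda_n=1+1/n\downarrow 1$ to settle the boundary case $\|f\|_{L^{p(\cdot)}(\gamma)}=1$ --- is precisely the standard proof of this unit-ball property, correctly handling both the integral term and the $L^\infty$ term over $\gamma_\infty^{p(\cdot)}$.
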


For the brevity, we will simply 
write $L^{p(\cdot)}$ for $L^{p(\cdot)}(\Gamma)$.
For $p\in\cP(\Gamma)$, put
\[
p_-:=\operatornamewithlimits{ess\,inf}_{t\in\Gamma} p(t),
\quad
p_+:=\operatornamewithlimits{ess\,sup}_{t\in\Gamma} p(t).
\] 
\begin{lemma}[{\cite[Corollary~2.81]{CF13}}]
\label{le:reflexivity-VLS}
Let $\Gamma$ be a rectifiable Jordan curve and $p\in\cP(\Gamma)$. Then
$L^{p(\cdot)}$ is reflexive if and only if $1<p_-\le p_+<\infty$.
\end{lemma}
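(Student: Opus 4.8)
The plan is to derive the reflexivity criterion from the abstract characterization recalled in the introduction, namely that a Banach function space is reflexive if and only if both it and its associate space are separable (see \cite[Chap.~1, Corollary~5.6]{BS88}), combined with the identification of the associate space of a variable Lebesgue space. First I would recall that the associate (K\"othe dual) space of $L^{p(\cdot)}$ is $L^{p'(\cdot)}$, where $p'\in\cP(\Gamma)$ is the pointwise conjugate exponent determined by $1/p(t)+1/p'(t)=1$ (with the conventions $p'=\infty$ where $p=1$ and $p'=1$ where $p=\infty$); this is precisely the content of the generalized H\"older inequality for $L^{p(\cdot)}$ together with its converse, so that $\big(L^{p(\cdot)}\big)'=L^{p'(\cdot)}$ with equivalent norms. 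Since $s\mapsto s/(s-1)$ is decreasing, the essential extrema of $p$ and $p'$ satisfy $(p')_+=p_-/(p_--1)$, whence $(p')_+<\infty$ if and only if $p_->1$.

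The key analytic input is the separability criterion: $L^{p(\cdot)}$ is separable if and only if $p_+<\infty$. I would prove this via the absolute continuity of the Luxemburg-Nakano norm, using that for the (separable) Lebesgue length measure a Banach function space is separable exactly when simple functions are dense, that is, when its norm is absolutely continuous. For the forward implication, assuming $p_+<\infty$ (so that $\Gamma_\infty^{p(\cdot)}$ is null and the modular reduces to an integral), given $f\in L^{p(\cdot)}$ and measurable sets $E_n\downarrow\emptyset$ I would estimate $\varrho_{p(\cdot),\Gamma}(f\chi_{E_n})=\int_{E_n}|f(\tau)|^{p(\tau)}\,|d\tau|\to 0$ by dominated convergence, and then pass from modular smallness to norm smallness by means of Lemma~\ref{le:VLS-unit-ball-property}. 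For the converse, assuming $p_+=\infty$, I would construct a function whose norm is not absolutely continuous by concentrating mass on a nested family of sets on which $p$ is arbitrarily large, and extract from it an uncountable $\eps$-separated subset of the unit ball, contradicting separability.

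Combining the two ingredients completes the argument: by the abstract criterion $L^{p(\cdot)}$ is reflexive if and only if both $L^{p(\cdot)}$ and $\big(L^{p(\cdot)}\big)'=L^{p'(\cdot)}$ are separable; by the separability criterion the former holds precisely when $p_+<\infty$ and the latter precisely when $(p')_+<\infty$, that is, when $p_->1$. Hence reflexivity is equivalent to $1<p_-\le p_+<\infty$. The step I expect to be the main obstacle is the separability criterion, and within it the direction $p_+=\infty\Rightarrow$ non-separability in the case where $p$ is finite almost everywhere: the failure of absolute continuity is then not localized on any $L^\infty$-type piece and must be produced from the unboundedness of $p$ alone, which is exactly the delicate point handled in \cite[Corollary~2.81]{CF13}.
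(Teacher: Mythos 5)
Your proposal concerns a statement the paper never actually proves: Lemma~\ref{le:reflexivity-VLS} is imported verbatim from \cite[Corollary~2.81]{CF13}, so the only comparison available is with that reference, and your route is essentially the standard one used there. The three ingredients are all correct: reflexivity of a Banach function space over the (separable) length measure is equivalent to separability of the space and of its associate space; the associate space of $L^{p(\cdot)}$ is $L^{p'(\cdot)}$ up to norm equivalence, which suffices since separability and reflexivity are isomorphic invariants; and $L^{p(\cdot)}$ is separable precisely when $p_+<\infty$, so your bookkeeping $(p')_+<\infty\Leftrightarrow p_->1$ finishes the argument. The one step you rightly flag as delicate --- non-separability when $p_+=\infty$ but $p<\infty$ a.e. --- can be closed exactly along the lines you sketch. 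Since $p_+=\infty$, infinitely many of the pairwise disjoint sets $F_n:=\{t\in\Gamma:\ n\le p(t)<n+1\}$, $n\ge 1$, have positive measure; for each such $n$ choose $c_n>0$ with $\varrho_{p(\cdot),\Gamma}(c_n\chi_{F_n})=1$ (possible because $c\mapsto\int_{F_n}c^{p(t)}|dt|$ is continuous, tends to $0$ as $c\to 0^+$ and to $\infty$ as $c\to\infty$), and put $g_n:=c_n\chi_{F_n}$; Lemma~\ref{le:VLS-unit-ball-property} then yields $\|g_n\|_{L^{p(\cdot)}}=1$, since $\varrho_{p(\cdot),\Gamma}(g_n/\lambda)\ge\lambda^{-1}>1$ for every $\lambda\in(0,1)$. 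For $\alpha\in\{0,1\}^{\N}$ set $f_\alpha:=\sum_n\alpha_n g_n$. Because $p\ge n$ on $F_n$ and the supports are disjoint,
\[
\varrho_{p(\cdot),\Gamma}(f_\alpha/2)
=\sum_n\alpha_n\int_{F_n}\left(\frac{c_n}{2}\right)^{p(t)}|dt|
\le\sum_{n\ge 1}2^{-n}\,\varrho_{p(\cdot),\Gamma}(g_n)\le 1,
\]
so $\|f_\alpha\|_{L^{p(\cdot)}}\le 2$, while for $\alpha\ne\beta$ the lattice property (A2) gives $\|f_\alpha-f_\beta\|_{L^{p(\cdot)}}\ge\|g_n\|_{L^{p(\cdot)}}=1$ for any $n$ with $\alpha_n\ne\beta_n$. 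This produces an uncountable $1$-separated family in a ball, so $L^{p(\cdot)}$ is not separable; the remaining case $|\Gamma_\infty^{p(\cdot)}|>0$ is immediate, since $L^{p(\cdot)}$ then contains an isometric copy of the non-separable space $L^\infty(\Gamma_\infty^{p(\cdot)})$. With this construction written out (and applied to $p'$ when $p_-=1$), your argument is complete and agrees in substance with the cited proof.
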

Embeddings of variable Lebesgue spaces are characterized as follows.
\begin{lemma}[{\cite[Corollary~2.48]{CF13}}]
\label{le:embeddings-VLS}
Let $\Gamma$ be a rectifiable Jordan curve. Suppose $p,q\in\cP(\Gamma)$. Then
$L^{p(\cdot)}\hookrightarrow L^{q(\cdot)}$ if and only if $q(t)\le p(t)$ for
almost all $t\in\Gamma$.
\end{lemma}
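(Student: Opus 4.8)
The plan is to prove the two implications separately, handling the forward (sufficiency) direction by a direct modular estimate and the reverse (necessity) direction by constructing an explicit function in $L^{p(\cdot)}\setminus L^{q(\cdot)}$. Throughout I would use that a rectifiable $\Gamma$ has finite length $|\Gamma|<\infty$ and that the length measure is non-atomic.

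For sufficiency, I would assume $q(t)\le p(t)$ a.e. and produce an embedding constant explicitly. The first observation is that $q\le p$ forces $\Gamma_\infty^{q(\cdot)}\subseteq\Gamma_\infty^{p(\cdot)}$, so if $\varrho_{p(\cdot),\Gamma}(f)\le 1$ then $\|f\|_{L^\infty(\Gamma_\infty^{q(\cdot)})}\le\|f\|_{L^\infty(\Gamma_\infty^{p(\cdot)})}\le 1$. To bound the integral part of $\varrho_{q(\cdot),\Gamma}(f)$ I would split $\Gamma\setminus\Gamma_\infty^{q(\cdot)}$ into the region where $p=\infty$ (there $|f|\le 1$, so $|f|^{q(t)}\le 1$) and the region where $p$ is finite, splitting the latter further according to $|f|\le 1$ (again $|f|^{q(t)}\le 1$) and $|f|>1$ (there $q(t)\le p(t)$ gives $|f|^{q(t)}\le|f|^{p(t)}$). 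Since the measure is finite, this yields $\varrho_{q(\cdot),\Gamma}(f)\le 2+2|\Gamma|=:C_0$ whenever $\varrho_{p(\cdot),\Gamma}(f)\le 1$. Finally I would convert the modular bound to a norm bound: because $q(t)\ge 1$, one has $\varrho_{q(\cdot),\Gamma}(f/\mu)\le\mu^{-1}\varrho_{q(\cdot),\Gamma}(f)$ for $\mu\ge 1$, so taking $\mu=C_0$ and invoking Lemma~\ref{le:VLS-unit-ball-property} gives $\|f\|_{L^{q(\cdot)}}\le C_0$ on the unit ball of $L^{p(\cdot)}$, hence $\|f\|_{L^{q(\cdot)}}\le C_0\|f\|_{L^{p(\cdot)}}$ by homogeneity.

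For necessity, I would argue by contradiction, assuming the embedding holds but $E:=\{t:q(t)>p(t)\}$ has positive measure; note $p<\infty$ on $E$. If $q=\infty$ on a positive-measure subset, a function built from constants $c_n\to\infty$ on tiny disjoint pieces with $\sum_n c_n^{p_+}|F_n|<\infty$ lies in $L^{p(\cdot)}$ yet is unbounded where $q=\infty$, so it escapes $L^{q(\cdot)}$. Otherwise $q$ is finite a.e. on $E$, and after intersecting with $\{p\le m,\ p+1/j\le q\le M\}$ I would fix a positive-measure set $F$ on which $q(t)\ge p(t)+1/j$ with bounded exponents. The core of the construction is to partition (a subset of) $F$ into disjoint $F_n$, set $f=c_n$ on $F_n$ with $c_n=n^j\to\infty$, and tune the measures so that $b_n:=\int_{F_n}c_n^{p(t)}|d\tau|$ satisfies $\sum_n b_n<\infty$ but $\sum_n c_n^{1/j}b_n=\infty$; for instance $b_n\approx\frac{1}{n(\log(n+1))^2}$ works, since then $c_n^{1/j}b_n=n\,b_n\approx\frac{1}{(\log(n+1))^2}$. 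The pointwise bound $c_n^{q(t)}\ge c_n^{1/j}c_n^{p(t)}$ on $F_n$ then shows $\varrho_{p(\cdot),\Gamma}(f)=\sum_n b_n<\infty$ while, for each fixed $\lambda\ge 1$, the tail of $\varrho_{q(\cdot),\Gamma}(f/\lambda)$ dominates $\lambda^{-(m+1/j)}\sum_n n\,b_n=\infty$; as the modular is non-increasing in $\lambda$, this forces $f\notin L^{q(\cdot)}$, contradicting $L^{p(\cdot)}\subseteq L^{q(\cdot)}$. Hence $|E|=0$.

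The main obstacle I expect is the bookkeeping in the necessity direction: one must first separate the exponents on a set of positive measure while correctly isolating the regions where $p$ or $q$ is infinite, and then arrange the constants and the measures so that the $p$-modular converges but the $q$-modular diverges \emph{at every scale} $\lambda$, not merely at $\lambda=1$. The divergence for all $\lambda$ is what dictates choosing a slowly divergent series with room to spare, since dividing $f$ by a fixed $\lambda\ge 1$ only multiplies the summands by the bounded factor $\lambda^{-(m+1/j)}$. The existence of pieces $F_n$ realizing the prescribed values $b_n$ follows from non-atomicity of the length measure and continuity of the integral as a function of the set, which I would invoke rather than verify in detail.
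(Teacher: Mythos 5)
The paper offers no proof of this lemma at all: it is quoted directly from Cruz-Uribe and Fiorenza \cite[Corollary~2.48]{CF13}, so there is no internal argument to compare against, and your proposal is in effect a self-contained reproof of the cited result. Your argument is correct and follows what is essentially the standard route for this embedding theorem. For sufficiency, the splitting of the $q$-modular (the region where $p=\infty$, then $\{|f|\le 1\}$ and $\{|f|>1\}$ on the region where $p<\infty$), together with $|\Gamma|<\infty$, gives $\varrho_{q(\cdot),\Gamma}(f)\le 2+2|\Gamma|$ on the unit ball of $L^{p(\cdot)}$, and your conversion of this modular bound into a norm bound via $\varrho_{q(\cdot),\Gamma}(f/\mu)\le\mu^{-1}\varrho_{q(\cdot),\Gamma}(f)$ for $\mu\ge 1$ is legitimate precisely because $q(t)\ge 1$. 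For necessity, exhausting $E=\{q>p\}$ by sets of the form $\{p\le m,\ p+1/j\le q\}$, taking $c_n=n^j$ and tuning $b_n=\int_{F_n}c_n^{p(t)}|d\tau|\approx 1/(n\log^2(n+1))$ does produce $f\in L^{p(\cdot)}$ with $\varrho_{q(\cdot),\Gamma}(f/\lambda)\ge \lambda^{-(m+1/j)}\sum_n n\,b_n=\infty$ for every $\lambda\ge 1$; the key inequality $(c_n/\lambda)^{q(t)}\ge (c_n/\lambda)^{p(t)+1/j}$ holds once $c_n\ge\lambda$, so the divergence at every scale is secured, and non-atomicity of arc length justifies carving out the disjoint pieces $F_n$ with prescribed weights. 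Your observation that mere set inclusion $L^{p(\cdot)}\subseteq L^{q(\cdot)}$ already forces $q\le p$ a.e.\ is in fact slightly stronger than what the lemma asserts.

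One slip should be repaired: in the subcase where $q=\infty$ on a positive-measure subset $G$ of $E$, you require $\sum_n c_n^{p_+}|F_n|<\infty$ with $p_+$ the essential supremum of $p$ over $\Gamma$. But $p_+$ may well be infinite (pointwise finiteness of $p$ on $E$ does not bound it), in which case this condition is vacuous and the verification that $f\in L^{p(\cdot)}$ breaks down. The remedy is exactly the device you already use in the finite-$q$ case: since $p<\infty$ pointwise on $G$, some $G\cap\{p\le m\}$ has positive measure, and the pieces $F_n$ should be placed inside that set, with the summability condition $\sum_n c_n^{m}|F_n|<\infty$ replacing the one involving $p_+$. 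With that adjustment the proof is complete.
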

\subsection{Pointwise multipliers between variable Lebesgue spaces}
In this subsection we will describe the space of pointwise multipliers
between variable Lebesgue spaces. The next lemma follows
from \cite[Section~2, Property (f) and Theorem~1]{MP89} and the fact
that variable Lebesgue spaces are Banach function spaces 
\cite[Section~2.10.3]{CF13}.
\begin{lemma}\label{le:Maligranda-Persson}
Let $\gamma$ be a measurable subset of a rectifiable Jordan curve $\Gamma$
and $p\in\cP(\Gamma)$. Then 
\[
M(L^\infty(\gamma),L^{p(\cdot)}(\gamma))\equiv L^{p(\cdot)}(\gamma),
\quad
M(L^{p(\cdot)}(\gamma),L^{p(\cdot)}(\gamma))\equiv L^\infty(\gamma).
\]
\end{lemma}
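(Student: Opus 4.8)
The plan is to treat the two identities separately, and to observe first that the second one is already subsumed by a general fact about Banach function spaces. Since $L^{p(\cdot)}(\gamma)$ is a Banach function space (by \cite[Section~2.10.3]{CF13}), the identity $M(L^{p(\cdot)}(\gamma),L^{p(\cdot)}(\gamma))\equiv L^\infty(\gamma)$ is an instance of the general equality $M(X(\gamma),X(\gamma))\equiv L^\infty(\gamma)$ recorded in the introduction, so no separate argument is needed for it. The whole task therefore reduces to proving the first identity $M(L^\infty(\gamma),L^{p(\cdot)}(\gamma))\equiv L^{p(\cdot)}(\gamma)$, for which it suffices to establish both inclusions together with matching norm bounds, since this yields the isometric coincidence denoted by $\equiv$.

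For the inclusion $L^{p(\cdot)}(\gamma)\subseteq M(L^\infty(\gamma),L^{p(\cdot)}(\gamma))$ together with the bound $\|f\|_{M(L^\infty(\gamma),L^{p(\cdot)}(\gamma))}\le\|f\|_{L^{p(\cdot)}(\gamma)}$, I would take any $f\in L^{p(\cdot)}(\gamma)$ and any $g\in L^\infty(\gamma)$ and use the pointwise estimate $|fg|\le\|g\|_{L^\infty(\gamma)}|f|$ almost everywhere on $\gamma$. The lattice property (A2) of the Banach function norm then yields $\|fg\|_{L^{p(\cdot)}(\gamma)}\le\|g\|_{L^\infty(\gamma)}\|f\|_{L^{p(\cdot)}(\gamma)}$; in particular $fg\in L^{p(\cdot)}(\gamma)$, so $f$ is indeed a multiplier, and taking the supremum over all $g$ in the unit ball of $L^\infty(\gamma)$ gives the claimed norm bound.

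For the reverse inclusion and the reverse bound $\|f\|_{L^{p(\cdot)}(\gamma)}\le\|f\|_{M(L^\infty(\gamma),L^{p(\cdot)}(\gamma))}$, the key step is to test the multiplier against the constant function $\mathbf{1}$: since $\gamma$ has finite positive measure, $\mathbf{1}\in L^\infty(\gamma)$ with $\|\mathbf{1}\|_{L^\infty(\gamma)}=1$, so for any $f\in M(L^\infty(\gamma),L^{p(\cdot)}(\gamma))$ we have $f=f\cdot\mathbf{1}\in L^{p(\cdot)}(\gamma)$ and $\|f\|_{L^{p(\cdot)}(\gamma)}=\|f\cdot\mathbf{1}\|_{L^{p(\cdot)}(\gamma)}\le\|f\|_{M(L^\infty(\gamma),L^{p(\cdot)}(\gamma))}$. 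Combining the two inequalities gives equality of norms, hence the isometric identity. I do not expect a genuine obstacle here: the argument uses only the lattice property and the presence of the unit in $L^\infty(\gamma)$. The single point deserving a line of care is confirming that the general Banach function space identity invoked for the second equality indeed applies to $L^{p(\cdot)}(\gamma)$, which is immediate once we know it is a Banach function space.
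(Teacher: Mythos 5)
Your proposal is correct, but it is more self-contained than the paper's own treatment, which consists entirely of citations: the paper derives both identities from the general multiplier results of Maligranda and Persson (\cite[Section~2, Property~(f) and Theorem~1]{MP89}) combined with the fact that $L^{p(\cdot)}(\gamma)$ is a Banach function space \cite[Section~2.10.3]{CF13}, and writes out no argument at all. You perform the same reduction for the second identity, quoting $M(X(\gamma),X(\gamma))\equiv L^\infty(\gamma)$ from the introduction (which the paper likewise justifies only by references), but for the first identity you replace the citation with a direct proof: the lattice property (A2) gives $\|fg\|_{L^{p(\cdot)}(\gamma)}\le\|g\|_{L^\infty(\gamma)}\|f\|_{L^{p(\cdot)}(\gamma)}$, hence $\|f\|_{M(L^\infty(\gamma),L^{p(\cdot)}(\gamma))}\le\|f\|_{L^{p(\cdot)}(\gamma)}$, and testing against the constant function $\mathbf{1}$, whose $L^\infty$-norm equals $1$ because $\gamma$ has positive measure, gives the reverse inequality, so the two norms coincide and the identity is isometric. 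Your argument uses nothing specific to variable exponents beyond the Banach-function-space structure, so it in fact proves the general statement $M(L^\infty(\gamma),X(\gamma))\equiv X(\gamma)$ that the paper imports from \cite{MP89}; what your route buys is a self-contained proof at the cost of a few lines, whereas the paper's route is shorter and signals that these are standard known facts.
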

Now we state the following two simple statements.
\begin{lemma}\label{le:VLS-direct-sum}
Let $\Gamma$ be a rectifiable Jordan curve and $\gamma_1,\dots,\gamma_k$
be measurable subsets of $\Gamma$ such that
\begin{equation}\label{eq:partition-Gamma}
\gamma_i\cap\gamma_j=\emptyset
\quad\mbox{for}\quad i,j\in\{1,\dots,k\},
\quad
\gamma_1\cup\dots\cup\gamma_k=\Gamma.
\end{equation}
If $p\in\cP(\Gamma)$, then
\[
L^{p(\cdot)}=L^{p(\cdot)}(\gamma_1)\oplus\dots\oplus L^{p(\cdot)}(\gamma_k),
\]
where the norm in the direct sum 
$L^{p(\cdot)}(\gamma_1)\oplus\dots\oplus L^{p(\cdot)}(\gamma_k)$
is defined by
\[
\|f\|_{L^{p(\cdot)}(\gamma_1)\oplus\dots\oplus L^{p(\cdot)}(\gamma_k)}
=
\|f\chi_{\gamma_1}\|_{L^{p(\cdot)}(\gamma_1)}
+
\dots
+
\|f\chi_{\gamma_k}\|_{L^{p(\cdot)}(\gamma_k)}.
\]
\end{lemma}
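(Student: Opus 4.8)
The plan is to reduce the statement to a pointwise disjointness argument about the Luxemburg--Nakano modular. The key observation is that the functional $\varrho_{p(\cdot),\gamma}$ is additive over disjoint subsets: if the $\gamma_i$ satisfy \eqref{eq:partition-Gamma}, then for any measurable $f$ on $\Gamma$ we have
\begin{equation}\label{eq:modular-additive}
\varrho_{p(\cdot),\Gamma}(f)=\sum_{i=1}^{k}\varrho_{p(\cdot),\gamma_i}(f\chi_{\gamma_i}),
\end{equation}
which follows directly from the definition of $\varrho_{p(\cdot),\gamma}$ as a sum of an integral over $\gamma\setminus\gamma_\infty^{p(\cdot)}$ and an $L^\infty$-term over $\gamma_\infty^{p(\cdot)}$, since both the integral and the essential supremum split over a disjoint partition. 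I would first record \eqref{eq:modular-additive} as the backbone of the proof.

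Next I would establish that $L^{p(\cdot)}$ and the direct sum coincide as sets. If $f\in L^{p(\cdot)}$, then $\varrho_{p(\cdot),\Gamma}(f/\lambda)<\infty$ for some $\lambda>0$, so by \eqref{eq:modular-additive} each summand $\varrho_{p(\cdot),\gamma_i}((f\chi_{\gamma_i})/\lambda)$ is finite, whence $f\chi_{\gamma_i}\in L^{p(\cdot)}(\gamma_i)$ for every $i$; conversely, if each $f\chi_{\gamma_i}\in L^{p(\cdot)}(\gamma_i)$, choosing a common $\lambda$ (the maximum of finitely many admissible scalars) and summing via \eqref{eq:modular-additive} gives $f\in L^{p(\cdot)}$. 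Since the $\gamma_i$ cover $\Gamma$ disjointly, $f=\sum_i f\chi_{\gamma_i}$, so the linear isomorphism onto the direct sum is clear.

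For the norm equivalence I would argue through Lemma~\ref{le:VLS-unit-ball-property}, which equates the modular unit ball with the norm unit ball. To get the upper bound $\|f\chi_{\gamma_i}\|_{L^{p(\cdot)}(\gamma_i)}\le\|f\|_{L^{p(\cdot)}}$, I would note that if $\|f\|_{L^{p(\cdot)}}\le\lambda$ then $\varrho_{p(\cdot),\Gamma}(f/\lambda)\le 1$, and by \eqref{eq:modular-additive} and the lattice property each term $\varrho_{p(\cdot),\gamma_i}((f\chi_{\gamma_i})/\lambda)\le 1$, forcing $\|f\chi_{\gamma_i}\|_{L^{p(\cdot)}(\gamma_i)}\le\lambda$; summing over $i$ yields the direct-sum norm bounded by $k\|f\|_{L^{p(\cdot)}}$. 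The reverse inequality follows from the triangle inequality applied to $f=\sum_i f\chi_{\gamma_i}$ together with $\|f\chi_{\gamma_i}\|_{L^{p(\cdot)}}=\|f\chi_{\gamma_i}\|_{L^{p(\cdot)}(\gamma_i)}$ (the modular, and hence the norm, only sees the support $\gamma_i$). These two inequalities give the equivalence of norms.

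The main obstacle is purely bookkeeping rather than conceptual: handling the $L^\infty$-contribution on $\gamma_\infty^{p(\cdot)}$ correctly when splitting the modular, and being careful that the homogeneity used to pass between a modular bound for $f/\lambda$ and a norm bound for $f$ goes through the essential-supremum term as well as the integral term. Since $\varrho_{p(\cdot),\gamma}$ is not homogeneous, I must route every scaling argument through Lemma~\ref{le:VLS-unit-ball-property} and the definition of the Luxemburg norm as an infimum, rather than manipulating the modular directly; once that discipline is maintained, the additivity \eqref{eq:modular-additive} does all the real work.
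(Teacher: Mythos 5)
Your overall strategy is the natural one (the paper itself omits the proof as ``straightforward,'' so there is no paper argument to compare against), but the identity you designate as the backbone of the proof is false as stated. You claim exact additivity of the modular over the partition,
\[
\varrho_{p(\cdot),\Gamma}(f)=\sum_{i=1}^{k}\varrho_{p(\cdot),\gamma_i}(f\chi_{\gamma_i}),
\]
on the grounds that ``both the integral and the essential supremum split over a disjoint partition.'' The integral does; the essential supremum does not. Over a disjoint union the essential supremum is the \emph{maximum}, not the sum, of the essential suprema over the pieces. Concretely, take $p\equiv\infty$ on $\Gamma$, $k=2$ with $|\gamma_1|,|\gamma_2|>0$, and $f\equiv 1$: the left-hand side equals $\|f\|_{L^\infty(\Gamma)}=1$, while the right-hand side equals $2$. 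So the identity fails exactly in the situation you yourself flagged as the main obstacle, namely when $\gamma_\infty^{p(\cdot)}$ meets several $\gamma_i$ in positive measure.

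Fortunately, every inference you actually draw from this identity uses only two inequalities that \emph{are} true: first, $\varrho_{p(\cdot),\gamma_j}(f\chi_{\gamma_j})\le\varrho_{p(\cdot),\Gamma}(f)$ for each $j$ (both the integral and the supremum can only decrease under restriction to a subset); second, $\varrho_{p(\cdot),\Gamma}(f)\le\sum_{i=1}^{k}\varrho_{p(\cdot),\gamma_i}(f\chi_{\gamma_i})$ (additivity of the integral part plus $\max\le\sum$ for the supremum part). The first inequality yields both set-inclusion $f\in L^{p(\cdot)}\Rightarrow f\chi_{\gamma_i}\in L^{p(\cdot)}(\gamma_i)$ and, combined with Lemma~\ref{le:VLS-unit-ball-property}, the estimate $\sum_i\|f\chi_{\gamma_i}\|_{L^{p(\cdot)}(\gamma_i)}\le k\|f\|_{L^{p(\cdot)}}$; the second gives the converse inclusion with a common scalar $\lambda$. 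Your reverse norm estimate is unaffected, since for a function supported in $\gamma_i$ the modulars over $\Gamma$ and over $\gamma_i$ genuinely coincide (here the supremum term causes no trouble), so the triangle inequality gives $\|f\|_{L^{p(\cdot)}}\le\sum_i\|f\chi_{\gamma_i}\|_{L^{p(\cdot)}(\gamma_i)}$. In short: replace the false equality by the two-sided estimate and the proof is complete; as written, it rests on an incorrect lemma.
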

\begin{lemma}\label{le:VLS-multiplier-decomposition}
Let $\Gamma$ be a rectifiable Jordan curve and $\gamma_1,\dots,\gamma_k$
be measurable subsets of $\Gamma$ satisfying \eqref{eq:partition-Gamma}.
If $p,q\in\cP(\Gamma)$ and $q(t)\le p(t)$ for almost all $t\in\Gamma$, then
\begin{align*}
&
M\big(L^{p(\cdot)}(\gamma_1)\oplus \dots\oplus L^{p(\cdot)}(\gamma_k),
L^{q(\cdot)}(\gamma_1)\oplus\dots\oplus L^{q(\cdot)}(\gamma_k)\big)
\\
&=
M(L^{p(\cdot)}(\gamma_1),L^{q(\cdot)}(\gamma_1))
\oplus \dots \oplus
M(L^{p(\cdot)}(\gamma_k),L^{q(\cdot)}(\gamma_k)).
\end{align*}
\end{lemma}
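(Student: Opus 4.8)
The plan is to reduce everything to the single structural observation that multiplication by a fixed function is ``block diagonal'' with respect to the partition \eqref{eq:partition-Gamma}. By Lemma~\ref{le:VLS-direct-sum}, every $g\in L^{p(\cdot)}$ decomposes uniquely as $g=\sum_{i=1}^k g\chi_{\gamma_i}$ with $g\chi_{\gamma_i}\in L^{p(\cdot)}(\gamma_i)$, and the same holds in $L^{q(\cdot)}$. Writing $f_i:=f\chi_{\gamma_i}$ and $g_i:=g\chi_{\gamma_i}$, the disjointness $\gamma_i\cap\gamma_j=\emptyset$ gives $(fg)\chi_{\gamma_i}=f_i g_i$, so the product $fg$ lies in $L^{q(\cdot)}$ if and only if each $f_i g_i$ lies in $L^{q(\cdot)}(\gamma_i)$.

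First I would prove the set equality. If $f\in M(L^{p(\cdot)},L^{q(\cdot)})$, then for each fixed $j$ and every $h\in L^{p(\cdot)}(\gamma_j)$ the function $h$ (extended by zero off $\gamma_j$) lies in $L^{p(\cdot)}$, whence $f_j h=(fh)\chi_{\gamma_j}\in L^{q(\cdot)}(\gamma_j)$; thus $f_j\in M(L^{p(\cdot)}(\gamma_j),L^{q(\cdot)}(\gamma_j))$. Conversely, if every $f_i$ is a multiplier on its block, then for arbitrary $g\in L^{p(\cdot)}$ each $f_i g_i\in L^{q(\cdot)}(\gamma_i)$, and by Lemma~\ref{le:VLS-direct-sum} their sum $fg$ lies in $L^{q(\cdot)}$. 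This shows that the two sides coincide as sets.

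It remains to compare the norms. Using Lemma~\ref{le:VLS-direct-sum} and the submultiplicativity $\|f_i g_i\|_{L^{q(\cdot)}(\gamma_i)}\le\|f_i\|_{M(L^{p(\cdot)}(\gamma_i),L^{q(\cdot)}(\gamma_i))}\|g_i\|_{L^{p(\cdot)}(\gamma_i)}$ built into the multiplier norm, I would bound
\[
\|fg\|_{L^{q(\cdot)}}=\sum_{i=1}^k\|f_i g_i\|_{L^{q(\cdot)}(\gamma_i)}\le\Big(\max_{1\le i\le k}\|f_i\|_{M(\cdots)}\Big)\|g\|_{L^{p(\cdot)}},
\]
which yields $\|f\|_{M(L^{p(\cdot)},L^{q(\cdot)})}\le\max_i\|f_i\|_{M(\cdots)}$. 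For the reverse inequality I would test against functions supported in a single block: for $g_j\in L^{p(\cdot)}(\gamma_j)$ with $\|g_j\|_{L^{p(\cdot)}(\gamma_j)}\le 1$, extending by zero gives a unit-norm element of $L^{p(\cdot)}$ with $\|f g_j\|_{L^{q(\cdot)}}=\|f_j g_j\|_{L^{q(\cdot)}(\gamma_j)}$, so taking the supremum over $g_j$ and then over $j$ gives $\|f\|_{M(L^{p(\cdot)},L^{q(\cdot)})}\ge\max_j\|f_j\|_{M(\cdots)}$. Hence the operator norm equals $\max_i\|f_i\|_{M(\cdots)}$, which is equivalent (with constants $1$ and $k$) to the sum $\sum_i\|f_i\|_{M(\cdots)}$ defining the norm of the direct sum on the right-hand side; this establishes the claimed relation ``$=$''.

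I do not expect a genuine obstacle here: the hypothesis $q(t)\le p(t)$ (equivalently $L^{p(\cdot)}\hookrightarrow L^{q(\cdot)}$ by Lemma~\ref{le:embeddings-VLS}) only serves to guarantee that the multiplier spaces in question are the nontrivial ones, while the decomposition itself is a purely formal consequence of the disjointness of the $\gamma_i$ together with Lemma~\ref{le:VLS-direct-sum}. The only point requiring mild care is the bookkeeping of norms, namely noting that the operator norm over the whole curve collapses to a \emph{maximum} over blocks rather than a sum, so that the identification with the sum-normed direct sum holds only up to the finite equivalence constant $k$.
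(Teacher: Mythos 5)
Your proposal is correct, and in fact the paper offers no proof to compare against: it declares the proofs of Lemmas~\ref{le:VLS-direct-sum} and~\ref{le:VLS-multiplier-decomposition} ``straightforward'' and omits them, and your block-diagonal argument (restriction/extension by zero between $\Gamma$ and the blocks $\gamma_i$) is exactly the routine verification being alluded to. One imprecision worth fixing: the displayed equality $\|fg\|_{L^{q(\cdot)}}=\sum_{i=1}^k\|f_ig_i\|_{L^{q(\cdot)}(\gamma_i)}$ and the ensuing bound by $\bigl(\max_i\|f_i\|_{M(\cdots)}\bigr)\|g\|_{L^{p(\cdot)}}$ are not literally true --- for the Luxemburg norm the norm of a sum of disjointly supported pieces is in general strictly smaller than the sum of their norms (already for constant exponent $p=2$), and one only has $\sum_i\|g\chi_{\gamma_i}\|_{L^{p(\cdot)}(\gamma_i)}\le k\|g\|_{L^{p(\cdot)}}$ rather than $\le\|g\|_{L^{p(\cdot)}}$ --- so your chain yields $\|f\|_{M(L^{p(\cdot)},L^{q(\cdot)})}\le k\max_i\|f_i\|_{M(\cdots)}$, i.e.\ equivalence rather than equality of the operator norm with $\max_i\|f_i\|_{M(\cdots)}$. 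This is harmless: since the paper's symbol $=$ between spaces means coincidence of sets together with two-sided norm equivalence, your set equality plus the two-sided estimates with constants depending only on $k$ establish precisely the assertion of the lemma.
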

The proofs of the above two lemmas are straightforward and they are omitted.

We will need the following generalized H\"older inequality.
\begin{lemma}[{\cite[Corollary~2.28]{CF13}}]
\label{le:Hoelder-VLS}
Let $\Gamma$ be a rectifiable Jordan curve. Suppose $p,q,r\in\cP(\Gamma)$
are related by
\begin{equation}\label{eq:Hoelder}
\frac{1}{q(t)}=\frac{1}{p(t)}+\frac{1}{r(t)},
\quad t\in\Gamma.
\end{equation}
Then there exists a constant $C>0$ such that for all 
$f\in L^{p(\cdot)}$ and $g\in L^{r(\cdot)}$, one has
$fg\in L^{q(\cdot)}$ and
\[
\|fg\|_{L^{q(\cdot)}}\le C\|f\|_{L^{p(\cdot)}}\|g\|_{L^{r(\cdot)}}.
\]
\end{lemma}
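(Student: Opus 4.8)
The plan is to derive the inequality from a pointwise Young-type estimate combined with the modular description of the unit ball in Lemma~\ref{le:VLS-unit-ball-property}. Since the Luxemburg--Nakano norm is absolutely homogeneous, by scaling it suffices to show $\|fg\|_{L^{q(\cdot)}}\le C$ whenever $\|f\|_{L^{p(\cdot)}}\le 1$ and $\|g\|_{L^{r(\cdot)}}\le 1$; the general statement then follows on applying this to $f/\|f\|_{L^{p(\cdot)}}$ and $g/\|g\|_{L^{r(\cdot)}}$. By Lemma~\ref{le:VLS-unit-ball-property}, the normalization is equivalent to $\varrho_{p(\cdot),\Gamma}(f)\le 1$ and $\varrho_{r(\cdot),\Gamma}(g)\le 1$, so it is enough to bound $\varrho_{q(\cdot),\Gamma}(fg)$ by an absolute constant and then pass back to the norm.

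The core estimate lives on the set where all three exponents are finite. There the relation $1/q(t)=1/p(t)+1/r(t)$ forces $q(t)\le p(t)$ and $q(t)\le r(t)$, so $\alpha(t):=p(t)/q(t)\ge 1$ and $\beta(t):=r(t)/q(t)\ge 1$ are conjugate, $1/\alpha(t)+1/\beta(t)=q(t)/p(t)+q(t)/r(t)=1$. Applying the scalar Young inequality $ab\le a^{\alpha}/\alpha+b^{\beta}/\beta$ with $a=|f(t)|^{q(t)}$ and $b=|g(t)|^{q(t)}$ gives, pointwise,
\[
|f(t)g(t)|^{q(t)}\le\frac{q(t)}{p(t)}|f(t)|^{p(t)}+\frac{q(t)}{r(t)}|g(t)|^{r(t)}\le|f(t)|^{p(t)}+|g(t)|^{r(t)},
\]
since $q/p\le 1$ and $q/r\le 1$. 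Integrating over this set bounds its contribution to $\varrho_{q(\cdot),\Gamma}(fg)$ by $\varrho_{p(\cdot),\Gamma}(f)+\varrho_{r(\cdot),\Gamma}(g)\le 2$.

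The remaining bookkeeping handles the sets where an exponent is infinite. Where $p(t)=\infty$ (so $q(t)=r(t)<\infty$) the normalization gives $|f(t)|\le 1$ a.e.\ on that set, whence $|fg|^{q}\le|g|^{r}$ there, and symmetrically where $r(t)=\infty$; where $q(t)=\infty$ one has $p(t)=r(t)=\infty$ and the classical estimate $\|fg\|_{L^\infty}\le\|f\|_{L^\infty}\|g\|_{L^\infty}\le 1$ applies. Collecting the pieces yields $\varrho_{q(\cdot),\Gamma}(fg)\le C_0$ for an absolute constant $C_0\ge 1$, in particular $fg\in L^{q(\cdot)}$. Finally, because $q(t)\ge q_-\ge 1$, dividing by $C_0$ contracts the modular, $\varrho_{q(\cdot),\Gamma}(fg/C_0)\le C_0^{-1}\varrho_{q(\cdot),\Gamma}(fg)\le 1$, so Lemma~\ref{le:VLS-unit-ball-property} gives $\|fg\|_{L^{q(\cdot)}}\le C_0$, completing the normalized case with $C=C_0$.

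I expect the main obstacle to be organizational rather than conceptual: the clean pointwise Young inequality is available only where all exponents are finite, so one must partition $\Gamma$ into the regions above and carefully track how the $L^\infty$ contributions enter $\varrho_{q(\cdot),\Gamma}$. A secondary point needing care is the passage from the modular bound to the norm bound, where the inequality $q_-\ge 1$ is precisely what allows the constant $C_0$ to be absorbed by rescaling; without a positive lower bound on $q$ this final step would break down.
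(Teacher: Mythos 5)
Your proof is correct, but there is nothing in the paper to compare it against: the paper does not prove this lemma at all, it imports it wholesale as \cite[Corollary~2.28]{CF13}. Your argument is a valid self-contained proof and is essentially the standard textbook one: reduce by homogeneity to $\varrho_{p(\cdot),\Gamma}(f)\le 1$ and $\varrho_{r(\cdot),\Gamma}(g)\le 1$ via Lemma~\ref{le:VLS-unit-ball-property}; on the set where $p,q,r$ are all finite apply Young's inequality with the conjugate pair $\alpha=p/q$, $\beta=r/q$ to get $|fg|^{q}\le|f|^{p}+|g|^{r}$ pointwise; on the exceptional sets use the $L^\infty$ information encoded in the modulars; finally convert the modular bound $\varrho_{q(\cdot),\Gamma}(fg)\le C_0$ into the norm bound $\|fg\|_{L^{q(\cdot)}}\le C_0$ using $q\ge 1$ and $C_0\ge 1$. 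All steps check out, and the constant (one may take $C_0=5$) is absolute, as required. Three points you should make explicit in a final write-up, none of which affects correctness: (i) the partition is really $\{p,q,r<\infty\}$, $\{p=\infty,\ q<\infty\}$, $\{r=\infty,\ q<\infty\}$ and $\{q=\infty\}=\{p=r=\infty\}$; your parenthetical ``$p(t)=\infty$ (so $q(t)=r(t)<\infty$)'' needs the set $\{q=\infty\}$ removed first, which your separate treatment of that set implicitly does; (ii) Young's inequality requires $\alpha(t),\beta(t)>1$ strictly, which does hold on the all-finite set because $\alpha(t)=1$ would force $r(t)=\infty$ (and symmetrically for $\beta$), but this deserves a line; (iii) the inequality $q(t)\ge 1$ used in the last rescaling step is not a consequence of \eqref{eq:Hoelder} alone (which would only give $q\ge 1/2$) but is part of the hypothesis $q\in\cP(\Gamma)$, so your appeal to $q_-\ge 1$ is justified by assumption rather than derived.
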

The following result was obtained by Nakai \cite[Example~4.1]{N16}
under the additional hypothesis 
\[
\sup_{t\in\Gamma\setminus \Gamma_\infty^{r(\cdot)}}r(t)<\infty
\]
(and in the more general setting of quasi-Banach variable 
Lebesgue spaces spaces over arbitrary measure spaces).
Nakai also mentioned in \cite[Remark~4.2]{N16} (without proof) that this 
hypothesis is superfluous. For the convenience of the reader, we provide a 
proof here.
\begin{theorem}\label{th:multiplier-space-VLS}
Let $\Gamma$ be a rectifiable Jordan curve. Suppose $p,q,r\in\cP(\Gamma)$ are 
related by \eqref{eq:Hoelder}. Then 
$M(L^{p(\cdot)},L^{q(\cdot)})=L^{r(\cdot)}$.
\end{theorem}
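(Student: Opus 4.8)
The plan is to establish the two continuous embeddings $L^{r(\cdot)}\hookrightarrow M(L^{p(\cdot)},L^{q(\cdot)})$ and $M(L^{p(\cdot)},L^{q(\cdot)})\hookrightarrow L^{r(\cdot)}$, each with norm control, so that the spaces coincide with equivalent norms. The first embedding is immediate from the generalized H\"older inequality (Lemma~\ref{le:Hoelder-VLS}): if $a\in L^{r(\cdot)}$, then $af\in L^{q(\cdot)}$ for every $f\in L^{p(\cdot)}$ with $\|af\|_{L^{q(\cdot)}}\le C\|a\|_{L^{r(\cdot)}}\|f\|_{L^{p(\cdot)}}$, whence $a\in M(L^{p(\cdot)},L^{q(\cdot)})$ and $\|a\|_{M}\le C\|a\|_{L^{r(\cdot)}}$. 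All the work is in the reverse embedding.

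For the reverse embedding I would first reduce to the case when all three exponents are finite. Relation~\eqref{eq:Hoelder} forces $q(t)\le p(t)$ a.e., and $r(t)=\infty$ exactly where $p(t)=q(t)$, while $p(t)=\infty$ together with $r(t)<\infty$ forces $q(t)=r(t)$. Partitioning $\Gamma$ into $\Gamma_1=\{r=\infty\}$, $\Gamma_2=\{p=\infty,\ r<\infty\}$, $\Gamma_3=\{p<\infty,\ r<\infty\}$ and applying Lemmas~\ref{le:VLS-direct-sum} and~\ref{le:VLS-multiplier-decomposition}, it suffices to identify the multiplier space on each piece. On $\Gamma_1$ one has $L^{r(\cdot)}(\Gamma_1)\equiv L^\infty(\Gamma_1)$ and $M(L^{p(\cdot)}(\Gamma_1),L^{q(\cdot)}(\Gamma_1))\equiv M(L^{p(\cdot)}(\Gamma_1),L^{p(\cdot)}(\Gamma_1))\equiv L^\infty(\Gamma_1)$, while on $\Gamma_2$ one has $M(L^\infty(\Gamma_2),L^{q(\cdot)}(\Gamma_2))\equiv L^{q(\cdot)}(\Gamma_2)\equiv L^{r(\cdot)}(\Gamma_2)$; both follow from Lemma~\ref{le:Maligranda-Persson}. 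This leaves the main case $\Gamma_3$, where $1\le q(t)<p(t)<\infty$ and $0<r(t)<\infty$.

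On $\Gamma_3$ the idea is an extremal test function. Given $a\in M$ with $\|a\|_{M}\le 1$, relation~\eqref{eq:Hoelder} yields the pointwise identities $\big||a|^{r/p}\big|^{p}=|a|^{r}$ and $\big|a\,|a|^{r/p}\big|^{q}=|a|^{r}$, so the single function $f=|a|^{r(\cdot)/p(\cdot)}$ simultaneously realizes the modular of $f$ in $L^{p(\cdot)}$ and of $af$ in $L^{q(\cdot)}$ as $\int|a|^{r}$. Since $r$ and $a$ may be unbounded, I would work on the truncated sets $\Omega_n=\{t\in\Gamma_3:\ p(t)\le n,\ r(t)\le n,\ |a(t)|\le n\}$, which increase to $\Gamma_3$, and set $f_n=|a|^{r(\cdot)/p(\cdot)}\chi_{\Omega_n}$, a bounded function, hence in $L^{p(\cdot)}$. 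Writing $\lambda_0=\|f_n\|_{L^{p(\cdot)}}$ (finite and, since $p\le n<\infty$ on $\Omega_n$, satisfying $\varrho_{p(\cdot)}(f_n/\lambda_0)=1$), the multiplier bound gives $\varrho_{q(\cdot)}(af_n/\lambda_0)\le 1$ via $\|af_n\|_{L^{q(\cdot)}}\le\|a\|_{M}\,\lambda_0\le\lambda_0$ and Lemma~\ref{le:VLS-unit-ball-property}. These two facts read
\[
\int_{\Omega_n}|a|^{r}\lambda_0^{-p(t)}\,|dt|=1,
\qquad
\int_{\Omega_n}|a|^{r}\lambda_0^{-q(t)}\,|dt|\le 1 .
\]
Because $q(t)<p(t)$ on $\Omega_n$, the assumption $\lambda_0>1$ would make the second integrand strictly dominate the first on $\{|a|>0\}$, contradicting the displayed relations; hence $\lambda_0\le 1$, and then $\lambda_0^{-p(t)}\ge 1$ turns the first identity into $\int_{\Omega_n}|a|^{r}\,|dt|\le 1$. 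Letting $n\to\infty$ and using the Fatou property (A3) gives $\varrho_{r(\cdot)}(a)=\int_{\Gamma_3}|a|^{r}\,|dt|\le 1$, that is, $\|a\|_{L^{r(\cdot)}(\Gamma_3)}\le\|a\|_{M}$ by Lemma~\ref{le:VLS-unit-ball-property}.

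The main obstacle is exactly the one singled out after Theorem~\ref{th:multiplier-space-VLS}: bounding $\|a\|_{L^{r(\cdot)}}$ with a constant independent of $r$, since $r$ need not be bounded on its finite part. The usual modular-to-norm conversions lose powers that blow up with $r_+$, so a direct truncation to bounded exponents does not by itself give a uniform constant. The normalization $\lambda_0=\|f_n\|_{L^{p(\cdot)}}$ together with the strict inequality $q<p$ is what forces $\lambda_0\le 1$ and thereby yields the constant $1$ uniformly in $n$, after which the monotone passage to the limit removes the truncation at no cost. I expect the remaining points to be routine: measurability of the three pieces, the equality $\varrho_{p(\cdot)}(f_n/\lambda_0)=1$ for $\lambda_0\in(0,\infty)$ granted by $p_+\le n<\infty$ on $\Omega_n$, and the degenerate subcase $a=0$ a.e.\ on $\Omega_n$.
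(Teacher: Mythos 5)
Your proof is correct, but the way you handle the crucial difficulty differs genuinely from the paper's. The skeleton is shared: both arguments split $\Gamma$ into three measurable pieces (yours $\{r=\infty\}$, $\{p=\infty,\ r<\infty\}$, $\{p<\infty,\ r<\infty\}$; the paper's $\Gamma_\infty^{p(\cdot)}$, $\Gamma_\infty^{r(\cdot)}\setminus\Gamma_\infty^{p(\cdot)}$, and the rest), settle the two degenerate pieces with Lemma~\ref{le:Maligranda-Persson}, get $L^{r(\cdot)}\hookrightarrow M(L^{p(\cdot)},L^{q(\cdot)})$ from the generalized H\"older inequality, and on the piece where all exponents are finite test the multiplier against essentially the same extremal function ($|a|^{r(\cdot)/p(\cdot)}$ in your notation; the paper's $f_\eps$ satisfies $|f_\eps|=(|f|/(c+\eps))^{r/p}$). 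The divergence is in taming the possible unboundedness of $r$, which is exactly what makes the modular--norm passage delicate. The paper, following \cite[Theorem~4]{MP89}, perturbs the multiplier constant to $c+\eps$ and proves $\varrho_{p(\cdot)}(f_\eps)\le 1$ by contradiction, invoking the Dudley--Norvai\v{s}a result \cite[Propositions A.1 and A.8]{DN11} that a modular exceeding $1$ can be cut down to exactly $1$ on a measurable subset; the $\eps$-room then yields the contradiction $1\le c/(c+\eps)<1$, and a final modular comparison gives $\|f\|_{L^{r(\cdot)}}\le c+\eps$. You instead truncate to $\Omega_n=\{p\le n,\ r\le n,\ |a|\le n\}$, where boundedness of exponent and function makes the norm-modular equality $\varrho_{p(\cdot)}(f_n/\lambda_0)=1$ elementary, and you exploit the strict pointwise inequality $q(t)<p(t)$ (automatic wherever $r(t)<\infty$) to force $\lambda_0\le 1$; the Fatou property then removes the truncation at no cost. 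In effect, the strict gap between $q$ and $p$ plays for you the role that the $\eps$-perturbation plays in the paper. What each buys: your argument is more self-contained (no exhaustion result from \cite{DN11}, no contradiction via subsets, only truncation and monotone convergence), while the paper's argument stays within the established Maligranda--Persson template and avoids any normalization or truncation step, which is why it extends verbatim to variable Lebesgue spaces over arbitrary nonatomic measure spaces, as remarked after the proof. Both routes give the reverse embedding $M(L^{p(\cdot)},L^{q(\cdot)})\hookrightarrow L^{r(\cdot)}$ with constant $1$.
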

\begin{proof}
Let $\gamma_1:=\Gamma_\infty^{p(\cdot)}$, 
$\gamma_2:=(\Gamma_\infty^{q(\cdot)}
\cup\Gamma_\infty^{r(\cdot)})\setminus\Gamma_\infty^{p(\cdot)}$, and
\[
\gamma_3:=\Gamma\setminus(\gamma_1\cup\gamma_2)
=
\Gamma\setminus
(\Gamma_\infty^{p(\cdot)}
\cup
\Gamma_\infty^{q(\cdot)}
\cup
\Gamma_\infty^{r(\cdot)}).
\]

From \eqref{eq:Hoelder} it follows that $p(t)=\infty$ and $q(t)=r(t)$
for $t\in\gamma_1$. Then by Lemma~\ref{le:Maligranda-Persson},
\begin{equation}\label{eq:multiplier-space-VLS-1}
M(L^{p(\cdot)}(\gamma_1),L^{q(\cdot)}(\gamma_1))
\equiv
M(L^\infty(\gamma_1),L^{r(\cdot)}(\gamma_1))
\equiv
L^{r(\cdot)}(\gamma_1).
\end{equation}

Similarly, from \eqref{eq:Hoelder} we also obtain 
$\Gamma_\infty^{q(\cdot)}\subset
\Gamma_\infty^{p(\cdot)}\cap\Gamma_\infty^{r(\cdot)}$, whence
$\gamma_2=\Gamma_\infty^{r(\cdot)}\setminus\Gamma_\infty^{p(\cdot)}$.
Therefore, $p(t)=q(t)<\infty$ and $r(t)=\infty$ for $t\in\gamma_2$.
Then, from Lemma~\ref{le:Maligranda-Persson} we get
\begin{equation}\label{eq:multiplier-space-VLS-2}
M(L^{p(\cdot)}(\gamma_2),L^{q(\cdot)}(\gamma_2))
\equiv
M(L^{p(\cdot)}(\gamma_2),L^{p(\cdot)}(\gamma_2))
\equiv
L^\infty(\gamma_2)
\equiv
L^{r(\cdot)}(\gamma_2).
\end{equation}

The rest of the proof is developed by analogy with the proof
of \cite[Theorem~4]{MP89}. Let 
$f\in M(L^{p(\cdot)}(\gamma_3),L^{q(\cdot)}(\gamma_3))$.
The multiplication operator $Tg=fg$ maps $L^{p(\cdot)}(\gamma_3)$
into $L^{q(\cdot)}(\gamma_3)$ and has a closed graph. Hence there
exists a constant $c\in(0,\infty)$ such that
\begin{equation}\label{eq:multiplier-space-VLS-3}
\|fg\|_{L^{q(\cdot)}(\gamma_3)}\le c\|g\|_{L^{p(\cdot)}(\gamma_3)}
\quad\mbox{for all}\quad
g\in L^{p(\cdot)}(\gamma_3).
\end{equation}
For $\eps>0$, put
\begin{equation}\label{eq:multiplier-space-VLS-4}
f_\eps(t)=
\left\{\begin{array}{lll}
\displaystyle
\frac{c+\eps}{f(t)}\left(\frac{|f(t)|}{c+\eps}\right)^{r(t)/q(t)}
&\mbox{if}& f(t)\ne 0,
\\[3mm]
0, &\mbox{if}& f(t)=0.
\end{array}\right.
\end{equation}
Let us show that
\begin{equation}\label{eq:multiplier-space-VLS-5}
\varrho_{p(\cdot),\gamma_3}(f_\eps)\le 1.
\end{equation}
Assume the contrary, that is, $\varrho_{p(\cdot),\gamma_3}(f_\eps)> 1$.
Then from \cite[Propositions A.1 and A.8]{DN11} it follows that
there exists a measurable set $\gamma\subset\gamma_3$ such that
\begin{equation}\label{eq:multiplier-space-VLS-6}
\varrho_{p(\cdot),\gamma_3}(\chi_\gamma f_\eps)= 1.
\end{equation}
From \eqref{eq:Hoelder} and \eqref{eq:multiplier-space-VLS-4} we get
\begin{equation}\label{eq:multiplier-space-VLS-7}
|f_\eps(t)|=\left(\frac{|f(t)|}{c+\eps}\right)^{r(t)/q(t)-1}=
\left(\frac{|f(t)|}{c+\eps}\right)^{r(t)/p(t)},
\quad
t\in\gamma.
\end{equation}
Equality \eqref{eq:multiplier-space-VLS-6} and 
Lemma~\ref{le:VLS-unit-ball-property} imply that 
$\|\chi_\gamma f_\eps\|_{L^{p(\cdot)}(\gamma_3)}\le 1$. Applying
\eqref{eq:multiplier-space-VLS-3} with $g=\chi_\gamma f_\eps$, we obtain
\[
\left\|\frac{\chi_\gamma f_\eps f}{c}\right\|_{L^{q(\cdot)}(\gamma_3)}
\le
\|\chi_\gamma f\|_{L^{p(\cdot)}(\gamma_3)}\le 1.
\]
Then, in view of Lemma~\ref{le:VLS-unit-ball-property}, we get
\begin{equation}\label{eq:multiplier-space-VLS-8}
\varrho_{q(\cdot),\gamma_3}\left(\frac{\chi_\gamma f_\eps f}{c}\right)\le 1.
\end{equation}
Combining \eqref{eq:multiplier-space-VLS-6}, \eqref{eq:multiplier-space-VLS-4},
\eqref{eq:Hoelder}, and \eqref{eq:multiplier-space-VLS-8}, we arrive at
\begin{align*}
1&=
\varrho_{p(\cdot),\gamma_3}(\chi_\gamma f_\eps)
=
\varrho_{r(\cdot),\gamma_3}\left(\frac{\chi_\gamma f}{c+\eps}\right)
=
\varrho_{q(\cdot),\gamma_3}\left(\frac{\chi_\gamma f_\eps f}{c+\eps}\right)
\\
&\le 
\frac{c}{c+\eps}
\varrho_{q(\cdot),\gamma_3}\left(\frac{\chi_\gamma f_\eps f}{c}\right)
\le 
\frac{c}{c+\eps}
<1,
\end{align*}
and we get a contradiction. Hence \eqref{eq:multiplier-space-VLS-5} is
fulfilled. Applying Lemma~\ref{le:VLS-unit-ball-property} to
\eqref{eq:multiplier-space-VLS-5}, 
we deduce that $\|f_\eps\|_{L^{p(\cdot)}(\gamma_3)}\le 1$. Then,
in view of \eqref{eq:multiplier-space-VLS-3}, we obtain
\[
\|f_\eps f\|_{L^{q(\cdot)}(\gamma_3)}
\le 
c\|f_\eps\|_{L^{p(\cdot)}(\gamma_3)}\le c.
\]
Taking into account the above inequality, equality
\eqref{eq:multiplier-space-VLS-4} and Lemma~\ref{le:VLS-unit-ball-property},
we see that
\[
\varrho_{r(\cdot),\gamma_3}\left(\frac{f}{c+\eps}\right)
=
\varrho_{q(\cdot),\gamma_3}\left(\frac{f_\eps f}{c+\eps}\right)
\le
\varrho_{q(\cdot),\gamma_3}\left(\frac{f_\eps f}{c}\right)
\le 1,
\]
whence $\|f\|_{L^{r(\cdot)}(\gamma_3)}\le c+\eps$. Letting $\eps\to 0$,
we obtain $\|f\|_{L^{r(\cdot)}(\gamma_3)}\le c$. It remains to observe
that the smallest constant in inequality \eqref{eq:multiplier-space-VLS-3}
coincides with $\|f\|_{M(L^{p(\cdot)}(\gamma_3),L^{q(\cdot)}(\gamma_3))}$.
Hence 
\[
M(L^{p(\cdot)}(\gamma_3),L^{q(\cdot)}(\gamma_3))\hookrightarrow 
L^{r(\cdot)}(\gamma_3). 
\]
The embedding
\[
L^{r(\cdot)}(\gamma_3)\hookrightarrow 
M(L^{p(\cdot)}(\gamma_3),L^{q(\cdot)}(\gamma_3))
\]
follows from the generalized H\"older inequality (Lemma~\ref{le:Hoelder-VLS}).
Thus,
\begin{equation}\label{eq:multiplier-space-VLS-9}
M(L^{p(\cdot)}(\gamma_3),L^{q(\cdot)}(\gamma_3))
=
L^{r(\cdot)}(\gamma_3).
\end{equation}

Finally, from \eqref{eq:multiplier-space-VLS-1}, 
\eqref{eq:multiplier-space-VLS-2}, \eqref{eq:multiplier-space-VLS-9}
and Lemmas~\ref{le:VLS-direct-sum}--\ref{le:VLS-multiplier-decomposition}
we obtain
\begin{align*}
&
M(L^{p(\cdot)},L^{q(\cdot)})
=
\\
&=
M\big(
L^{p(\cdot)}(\gamma_1)
\oplus 
L^{p(\cdot)}(\gamma_2)
\oplus 
L^{p(\cdot)}(\gamma_3),
L^{q(\cdot)}(\gamma_1)
\oplus 
L^{q(\cdot)}(\gamma_2)
\oplus 
L^{q(\cdot)}(\gamma_3)
\big)
\\
&=
M(L^{p(\cdot)}(\gamma_1),L^{q(\cdot)}(\gamma_1))
\oplus
M(L^{p(\cdot)}(\gamma_2),L^{q(\cdot)}(\gamma_2))
\oplus
M(L^{p(\cdot)}(\gamma_3),L^{q(\cdot)}(\gamma_3))
\\
&=
L^{r(\cdot)}(\gamma_1)
\oplus
L^{r(\cdot)}(\gamma_2)
\oplus
L^{r(\cdot)}(\gamma_3)=L^{r(\cdot)},
\end{align*}
which completes the proof.
\end{proof}
The above proof can be extended without any change to the case of variable 
Lebesgue spaces over arbitrary nonatomic measure spaces. The theorem itself 
is also true for arbitrary measure spaces. However the proof for not 
necessarily nonatomic measure spaces is more complicated. It can be developed 
by analogy with \cite{MN10}.
\subsection{The Cauchy singular integral operator $S$ on variable Lebesgue spaces}
David's theorem \cite{D84} (see also \cite[Theorem~4.17]{BK97}), says that the 
Cauchy singular integral operator $S$ is bounded on the standard Lebesgue space 
$L^p$, $1<p<\infty$, over a rectifiable Jordan curve $\Gamma$ if and only if 
$\Gamma$ is a Carleson curve. To formulate the generalization of this result
to the setting of variable Lebesgue spaces, we will need the following class
of nice variable exponents. 

Let $\Gamma$ be a rectifiable Jordan curve. We say that an exponent 
$p\in\cP(\Gamma)$ is locally log-H\"older continuous (cf. 
\cite[Definition~2.2]{CF13}) if $1<p_-\le p_+<\infty$
and there exists a constant $C_{p(\cdot),\Gamma}\in(0,\infty)$ such that
\[
|p(t)-p(\tau)|\le\frac{C_{p(\cdot),\Gamma}}{-\log|t-\tau|}
\quad\mbox{for all}\quad t,\tau\in\Gamma\quad\mbox{satisfying}\quad |t-\tau|<1/2.
\]
The class of all locally log-H\"older continuous exponent will be denoted by
$LH(\Gamma)$. Notice that some authors also denote this class by 
$\mathbb{P}^{\log}(\Gamma)$, see, e.g., \cite[Section~1.1.4]{KMRS16}.
\begin{theorem}[{\cite[Theorems~2.45 and 2.49]{KMRS16}}]
\label{th:S-boundedness-VLS}
Let $\Gamma$ be a rectifiable Jordan curve and $p\in LH(\Gamma)$.
Then the Cauchy singular integral operator $S$ is bounded on $L^{p(\cdot)}$
if and only if $\Gamma$ is a Carleson curve. 
\end{theorem}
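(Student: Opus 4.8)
The plan is to prove the two implications separately, with the necessity direction being a soft consequence of the general Banach function space theory already recalled, and the sufficiency direction carrying all the analytic weight.

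For the necessity, suppose $S$ is bounded on $L^{p(\cdot)}$. Since $p\in LH(\Gamma)$ forces $1<p_-\le p_+<\infty$, the space $L^{p(\cdot)}$ is reflexive by Lemma~\ref{le:reflexivity-VLS}, and it is a Banach function space. Hence the general principle recalled in the introduction applies verbatim: by \cite[Theorem~6.1]{K03} together with the H\"older inequality for Banach function spaces, boundedness of $S$ on a reflexive Banach function space over $\Gamma$ forces $\Gamma$ to be a Carleson curve. This disposes of the ``only if'' part with no extra work.

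For the sufficiency, assume $\Gamma$ is a Carleson curve and realize $S$ as a Calder\'on--Zygmund operator on $\Gamma$ viewed as a metric measure space. First I would note that the Carleson condition is exactly the upper Ahlfors--David bound $|\Gamma(t,\eps)|\le C\eps$, while the matching lower bound $|\Gamma(t,\eps)|\ge c\eps$ holds automatically because a connected rectifiable curve through $t$ must travel arc length at least $\eps$ to leave the disc of radius $\eps$; together these make $(\Gamma,|\cdot-\cdot|,|d\tau|)$ a space of homogeneous type with doubling arc-length measure. Next, using this regularity I would verify that the Cauchy kernel $1/(\pi i(\tau-t))$ satisfies the standard size and smoothness (H\"ormander) estimates relative to the Euclidean distance on $\Gamma$, so that $S$ is a Calder\'on--Zygmund kernel operator; its $L^2(\Gamma)$-boundedness is supplied by David's theorem, available precisely because $\Gamma$ is Carleson. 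With $S$ recognized as an $L^2$-bounded Calder\'on--Zygmund operator on a space of homogeneous type, the final step is to invoke the variable-exponent Calder\'on--Zygmund theory: under the local log-H\"older hypothesis $p\in LH(\Gamma)$ the Hardy--Littlewood maximal operator is bounded on $L^{p(\cdot)}(\Gamma)$ (no decay condition at infinity is needed, as $\Gamma$ is bounded), and this yields boundedness of $S$ on $L^{p(\cdot)}$. An equivalent route is to first derive weighted estimates for $S$ on $L^{p}(\Gamma,w)$ for Muckenhoupt weights $w$ on the Carleson curve and then apply Rubio de Francia extrapolation to variable Lebesgue spaces, again with log-H\"older regularity supplying the hypotheses.

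The main obstacle lies in the sufficiency, and within it two layers stand out. The deep classical input is the $L^2$-boundedness of the Cauchy integral on an arbitrary Carleson curve, that is, David's theorem; granting that, the remaining genuine work is the passage from $L^{2}$ (and weighted $L^{p}$) bounds to the variable-exponent space, where the log-H\"older condition is the precise regularity making the maximal operator bounded and hence driving the extrapolation and Calder\'on--Zygmund machinery. Care must also be taken that the Euclidean (chordal) distance, not arc length, is the correct metric for both the homogeneous-space structure and the kernel estimates, since the two need not be comparable on a general Carleson curve.
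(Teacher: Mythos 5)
The paper does not actually prove this theorem: it is imported verbatim from \cite[Theorems~2.45 and 2.49]{KMRS16}, so there is no internal proof to compare against, and your proposal should be measured against the cited literature it would replace. Your necessity half is correct and is exactly the argument the paper itself recalls in its introduction: $p\in LH(\Gamma)$ forces $1<p_-\le p_+<\infty$, hence $L^{p(\cdot)}$ is a reflexive Banach function space (Lemma~\ref{le:reflexivity-VLS}), and boundedness of $S$ on such a space implies the Carleson condition by \cite[Theorem~6.1]{K03} together with the H\"older inequality. Your sufficiency half is a correct outline of the standard route, and it is essentially how the result is established in \cite{KMRS16}: the Carleson condition is the upper Ahlfors bound, the lower bound is indeed automatic for a connected closed rectifiable curve, so $\Gamma$ with the chordal metric and arc-length measure is a space of homogeneous type; the Cauchy kernel satisfies the size and smoothness estimates by pure algebra; $L^2$-boundedness is David's theorem \cite{D84}; and the log-H\"older condition yields boundedness of the maximal operator on $L^{p(\cdot)}(\Gamma)$ (no decay condition being needed on a bounded space), after which variable-exponent Calder\'on--Zygmund theory or Rubio de Francia extrapolation finishes. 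What your sketch buys is transparency about where the real weight sits (David's theorem plus the variable-exponent maximal theorem); what it does not supply is the substance of either input, so as written it is a correctly assembled chain of citations rather than a self-contained proof --- which is precisely the status the theorem already has in the paper. One point to make explicit if you flesh it out: the extrapolation/Calder\'on--Zygmund step requires the maximal operator to be bounded not only on $L^{p(\cdot)}$ but also on the conjugate space $L^{p'(\cdot)}$; this is harmless here because $p\in LH(\Gamma)$ with $1<p_-\le p_+<\infty$ implies $p'\in LH(\Gamma)$, but it should be said.
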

\subsection{The Coburn-Simonenko theorem for Toeplitz operators acting
between Hardy type subspaces of variable Lebesgue spaces}
Now we are in a position to give a more precise formulation of 
Theorem~\ref{th:main} in the case of Toeplitz operators acting between
Hardy type subspaces $PL^{p(\cdot)}$ and $PL^{q(\cdot)}$ of variable
Lebesgue spaces $L^{p(\cdot)}$ and $L^{q(\cdot)}$, respectively.
\begin{theorem}\label{th:main-to-VLS}
Let $\Gamma$ be a Carleson Jordan curve. Suppose variable
exponents $p,q\in LH(\Gamma)$ and $r\in\cP(\Gamma)$ are related by
\eqref{eq:Hoelder}. If $a\in L^{r(\cdot)}\setminus\{0\}$, then the Toeplitz 
operator $T(a)\in\cL(PL^{p(\cdot)},PL^{q(\cdot)})$ has a trivial kernel 
in $PL^{p(\cdot)}$ or a dense image in $PL^{q(\cdot)}$.
\end{theorem}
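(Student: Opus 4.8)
The plan is to deduce Theorem~\ref{th:main-to-VLS} directly from the abstract Theorem~\ref{th:main} by setting $X:=L^{p(\cdot)}$ and $Y:=L^{q(\cdot)}$ and then verifying, one by one, the four hypotheses of that theorem: reflexivity of $X$ and $Y$, the embedding $X\hookrightarrow Y$, boundedness of the operator $S$ on both spaces, and the assertion that $a$ is a nonzero pointwise multiplier from $X$ to $Y$. Each of these verifications will be a short appeal to a result already collected in Sections~\ref{sec:preliminaries} and~\ref{sec:VLS}.

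First I would check reflexivity. Because $p,q\in LH(\Gamma)$, the definition of local log-H\"older continuity already forces $1<p_-\le p_+<\infty$ and $1<q_-\le q_+<\infty$, so Lemma~\ref{le:reflexivity-VLS} shows that $L^{p(\cdot)}$ and $L^{q(\cdot)}$ are reflexive Banach function spaces. Next I would establish the embedding: the relation \eqref{eq:Hoelder} reads $1/q(t)=1/p(t)+1/r(t)$ with $r\in\cP(\Gamma)$, whence $1/r(t)\ge 0$ and therefore $q(t)\le p(t)$ for almost all $t\in\Gamma$; by Lemma~\ref{le:embeddings-VLS} this is precisely the condition guaranteeing $L^{p(\cdot)}\hookrightarrow L^{q(\cdot)}$. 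The boundedness of $S$ on both spaces is then immediate from Theorem~\ref{th:S-boundedness-VLS}, since $\Gamma$ is a Carleson curve and $p,q\in LH(\Gamma)$.

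It remains to identify the symbol class, and here Theorem~\ref{th:multiplier-space-VLS} does the work: it gives $M(L^{p(\cdot)},L^{q(\cdot)})=L^{r(\cdot)}$, so the hypothesis $a\in L^{r(\cdot)}\setminus\{0\}$ coincides exactly with the requirement $a\in M(X,Y)\setminus\{0\}$ of Theorem~\ref{th:main}. With all four hypotheses in place, Theorem~\ref{th:main} applies verbatim and delivers the conclusion that $T(a)$ has a trivial kernel in $PL^{p(\cdot)}$ or a dense image in $PL^{q(\cdot)}$.

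Since every ingredient has already been prepared in the preliminary sections, I do not expect any genuine obstacle; the deduction is merely a matter of assembling the pieces. The only points that repay a moment's attention are that membership in $LH(\Gamma)$ silently encodes the bounds $1<p_-\le p_+<\infty$ (and likewise for $q$) needed for reflexivity, and that it is precisely the nonnegativity of $1/r(t)$ in \eqref{eq:Hoelder} that converts the exponent identity into the pointwise inequality $q\le p$ underlying the embedding.
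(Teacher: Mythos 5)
Your proposal is correct and follows exactly the same route as the paper's own proof: reflexivity via Lemma~\ref{le:reflexivity-VLS}, the embedding $L^{p(\cdot)}\hookrightarrow L^{q(\cdot)}$ via Lemma~\ref{le:embeddings-VLS} (using $1/r(t)\ge 0$ in \eqref{eq:Hoelder}), boundedness of $S$ via Theorem~\ref{th:S-boundedness-VLS}, the identification $M(L^{p(\cdot)},L^{q(\cdot)})=L^{r(\cdot)}$ via Theorem~\ref{th:multiplier-space-VLS}, and finally an appeal to Theorem~\ref{th:main}. No gaps; this matches the paper verbatim in substance.
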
 
\begin{proof}
We know from Lemma~\ref{le:reflexivity-VLS} that the spaces $L^{p(\cdot)}$
and $L^{q(\cdot)}$ are reflexive because $1<p_-,q_-$ and $p_+,q_+<\infty$
(in view of $p,q\in LH(\Gamma)$). Since $r\in\cP(\Gamma)$, we have 
$1\le r(t)\le\infty$ for almost all $t\in\Gamma$. Then we deduce from 
\eqref{eq:Hoelder} that $q(t)\le p(t)$ for almost all $t\in\Gamma$. Therefore, 
by Lemma~\ref{le:embeddings-VLS}, $L^{p(\cdot)}\hookrightarrow L^{q(\cdot)}$.
It follows from Theorem~\ref{th:S-boundedness-VLS} that the Cauchy singular 
integral operator $S$ is bounded on $L^{p(\cdot)}$ and $L^{q(\cdot)}$.
Now we observe that $L^{r(\cdot)}=M(L^{p(\cdot)},L^{q(\cdot)})$
in view of Theorem~\ref{th:multiplier-space-VLS}. It remains to apply 
Theorem~\ref{th:main}.
\end{proof}
Corollary~\ref{co:Hardy} follows immediately from Theorem~\ref{th:main-to-VLS}
if we take all exponents $p,q$, and $r$ constant.
\subsection*{Acknowledgement}
I would like to thank Karol Le\'snik for stimulating discussions and for 
sharing with me a preliminary version of \cite{L17}.

\end{document}